\newtheorem{thm}{Theorem}[section]
\newtheorem{prop}[thm]{Proposition}
\newtheorem{lem}[thm]{Lemma}
\newtheorem{cor}[thm]{Corollary}
\newtheorem{fact}[thm]{Fact}
\newtheorem{conj}[thm]{Conjecture}
\theoremstyle{remark}
\newtheorem{rem}[thm]{Remark}
\newtheorem*{acknow}{Acknowledgements}
\newcommand{\N}{\mathbb{N}}
\newcommand{\Z}{\mathbb{Z}}
\newcommand{\R}{\mathbb{R}}
\newcommand{\K}{\mathbb{K}}
\newcommand{\LL}{\mathcal{L}}
\newcommand{\g}{\mathfrak{g}}
\newcommand{\h}{\mathfrak{h}}
\renewcommand{\k}{\mathfrak{k}}
\renewcommand{\l}{\mathfrak{l}}
\newcommand{\im}{\operatorname{image}}
\newcommand{\sgn}{\operatorname{sgn}}
\newcommand{\rank}{\operatorname{rank}}
\newcommand{\rest}{\operatorname{rest}}
\newcommand{\w}{\wedge}
\newcommand{\ox}{\otimes}
\newcommand{\bs}{\backslash}
\newcommand{\bl}{\bullet}
\newcommand{\shift}{\widetilde}
\newcommand{\ep}{\varepsilon}
\newcommand{\injto}{\hookrightarrow}
\newcommand{\surjto}{\twoheadrightarrow}
\newcommand{\simto}{\xrightarrow{\sim}}
\renewcommand{\geq}{\geqslant}
\renewcommand{\leq}{\leqslant}
\newcommand{\bysame}{---------}
\begin{document}

\title{Proof of Kobayashi's rank conjecture on Clifford--Klein forms}
\author{Yosuke Morita}
\date{}

\maketitle

\begin{abstract}
T. Kobayashi conjectured in the 36th Geometry Symposium
in Japan (1989) that a homogeneous space $G/H$ of reductive type
does not admit a compact Clifford--Klein form if
$\rank G - \rank K < \rank H - \rank K_H$.
We solve this conjecture affirmatively.
We apply a cohomological obstruction to the existence of
compact Clifford--Klein forms proved previously by the author,
and use the Sullivan model for a reductive pair due to
Cartan--Chevalley--Koszul--Weil.
\end{abstract}

\maketitle

\section{Introduction}\label{rank:sect:intro}

A Clifford--Klein form of a homogeneous space $G/H$
is a quotient space
$\Gamma \bs G/H$, where $\Gamma$ is a discrete subgroup of $G$
acting properly and freely on $G/H$.
It is a typical example of a manifold locally modelled on $G/H$,
i.e.\ a manifold obtained by patching open sets of $G/H$
by left translations by elements of $G$.
Since the initial work \cite{Kob89Ann} by T. Kobayashi,
the existence problem of compact Clifford--Klein forms
has been studied by various methods (e.g.\ \cite{Kob-Ono90},
\cite{Kob92Duke}, \cite{Zim94}, \cite{Ben96}, \cite{Mar97}).

In this paper, we solve a conjecture on the nonexistence of
compact Clifford--Klein forms, posed by Kobayashi~\cite{Kob89Geom}
in 1989, affirmatively.
Recall that a homogeneous space $G/H$ is called of reductive type
if $G$ is a linear reductive Lie group with Cartan involution
$\theta$ and $H$ is a closed subgroup of $G$
with finitely many connected components such that $\theta(H) = H$.
We write $K$ and $K_H$
for the corresponding maximal compact subgroups of $G$ and $H$,
namely, $K=G^\theta$ and $K_H = H^\theta$, respectively
(throughout this paper, we use superscripts to signify the
invariant part, e.g.\ $G^\theta = \{ g \in G : \theta(g) = g \}$).
In this paper, the rank always means the complex rank
as opposed to the real rank (for instance, the rank of $\operatorname{U}(p,q)$
is not $\min \{ p,q \}$, but $p+q$), namely,
we define the rank of a reductive Lie algebra to be
the dimension of its maximal semisimple abelian subspace,
and the rank of a linear reductive Lie group to be
that of the corresponding Lie algebra.
Then, Kobayashi's conjecture is stated as follows:
\begin{conj}[{\cite[Conj.~6.4]{Kob89Geom}}]\label{rank:conj:rank}
A homogeneous space $G/H$
of reductive type does not admit a compact Clifford--Klein form if
$\rank G - \rank K < \rank H - \rank K_H$.
\end{conj}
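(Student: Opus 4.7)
The plan is to combine two ingredients: the cohomological obstruction to the existence of compact Clifford--Klein forms that I established in prior work, and the Cartan--Chevalley--Koszul--Weil (CCKW) Sullivan model for the relative Lie algebra cohomology of a reductive pair. The obstruction says that whenever $G/H$ of reductive type admits a compact Clifford--Klein form, a certain natural morphism in relative Lie algebra cohomology---essentially a restriction of the form $H^*(\g,H;\R)\to H^*(\g,K_H;\R)$ induced by $K_H\subseteq H$, and identified via Cartan--Chevalley--Eilenberg with the pullback $H^*(G_u/H_u;\R)\to H^*(G_u/K_{H,u};\R)$ between the corresponding compact duals---must be injective in a suitable degree. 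I will argue by contradiction: assuming a compact Clifford--Klein form exists, I will exhibit under the rank hypothesis a nonzero class in $H^*(\g,H;\R)$ whose image in $H^*(\g,K_H;\R)$ vanishes.

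The CCKW construction supplies the concrete cdga models needed to carry this out. For a reductive pair $(\g,\h)$, $H^*(\g,H;\R)$ is computed by a Sullivan-type model $\bigl((S\h^*)^H\otimes\Lambda P_\g,\ d\bigr)$, where $P_\g$ is the graded space of primitive generators of $H^*(\g;\R)$ (of total dimension $\rank G$) and $d$ implements the transgression through the Chern--Weil homomorphism $(S\g^*)^G\to (S\h^*)^H$. Replacing $(H,\h)$ by $(K_H,\k_H)$ yields the analogous model for $H^*(\g,K_H;\R)$ with the same exterior factor $\Lambda P_\g$ but a potentially larger ring of polynomial invariants on the base; the restriction map of the obstruction is realized transparently at the level of these models.

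The crux is a rank count. Classically, $\rank G-\rank K$ equals the number of odd primitive generators of $H^*(G_u/K_u;\R)$---equivalently, the dimension of the subspace of $P_\g$ whose transgression vanishes after passing to the $(\g,K)$-model---and $\rank H-\rank K_H$ plays the analogous role for $H^*(H_u/K_{H,u};\R)$. Under the hypothesis $\rank G-\rank K<\rank H-\rank K_H$, a dimension count in $\Lambda P_\g$ should produce a top-degree product of odd primitives that represents a nonzero class in $H^*(\g,H;\R)$ but that is forced to become a coboundary in the model for $(\g,K_H)$, because on the $K_H$-side extra primitives transgress into the augmentation ideal of the larger invariant ring. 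This contradicts the obstruction, and hence no compact Clifford--Klein form can exist.

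The main obstacle will be precisely this last step: pinpointing the explicit witness class and verifying the coboundary relation in the $(\g,K_H)$-model. It requires careful bookkeeping of how the transgression differential interacts with the restriction of invariants from $H$ to $K_H$, together with a clean translation of the rank inequality into a dimension statement about the relevant polynomial invariant rings. Thanks to the rigidity afforded by Chevalley's theorem---which describes both invariant rings as polynomial rings on explicit generators of degrees determined by the Weyl-group exponents---I expect the combinatorial accounting to go through and deliver the required obstruction.
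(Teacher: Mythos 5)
Your overall skeleton does match the paper's: combine the cohomological obstruction (injectivity of $i : H^\bl(\g, \h; \R) \to H^\bl(\g, \k_H; \R)$ is necessary for the existence of a compact form) with the Cartan--Chevalley--Koszul--Weil model, and convert the rank inequality into a dimension count. The paper uses exactly the identity you anticipate, $\dim (P_{\g^\ast})^{-\theta} = \rank G - \rank K$ and $\dim (P_{\h^\ast})^{-\theta} = \rank H - \rank K_H$, to conclude that the restriction $\rest : (P_{\g^\ast})^{-\theta} \to (P_{\h^\ast})^{-\theta}$ cannot be surjective. But the step you defer --- deducing non-injectivity of $i$ from the failure of that surjectivity --- is the entire content of the main theorem, and your sketch of it is not just incomplete but pointed at the wrong kind of witness class.

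You propose to find the witness as a ``top-degree product of odd primitives'' in $\Lambda P_{\g^\ast}$. The classes that can die under $i$ are instead governed by the Chern--Weil homomorphism $w : (S \shift{\h^\ast})^\h \to H^\bl(\g, \h; \K)$: the correct witness is the even-degree class $w(\shift{Q})$ for a $\theta$-anti-invariant polynomial $Q \in ((S^+ \h^\ast)^\h)^{-\theta}$ that does \emph{not} lie in the ideal $(S^+ \g^\ast)^\g|_\h \cdot (S \h^\ast)^\h$. Three facts, none of which appear in your plan, make this work: (a) $\ker w = (S^+ \g^\ast)^\g|_\h \cdot (S \h^\ast)^\h$, so $w(\shift{Q}) \neq 0$; (b) anti-invariance forces $Q|_{\k_H} = 0$ (since $\theta$ is the identity on $\k_H$), so $i(w(\shift{Q})) = 0$; (c) the Cartan map identifies $(S^+ \g^\ast)^\g / \bigl( (S^+ \g^\ast)^\g \cdot (S^+ \g^\ast)^\g \bigr)$ with $P_{\g^\ast}$ compatibly with $\theta$ and with restriction to $\h$, which --- together with a short averaging-and-induction argument passing from indecomposables to the full ideal --- is what translates non-surjectivity on anti-invariant primitives into the existence of such a $Q$. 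Chevalley's theorem on Weyl-group invariants, which you invoke at the end, is not the relevant input here; the transgression theorem is. As written, your plan would stall precisely at the step you flag as the main obstacle, and the route you indicate for getting past it (a dimension count inside $\Lambda P_{\g^\ast}$ producing a product of odd primitives that becomes exact) does not produce the class that actually dies.
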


We prove Conjecture~\ref{rank:conj:rank}
using relative Lie algebra cohomology.
Let us briefly recall its definition from a geometric viewpoint
(see Section~\ref{rank:subsect:def}
for a purely algebraic treatment).
We write $\g$, $\h$, $\k$ and $\k_H$ for the Lie algebras of
$G$, $H$, $K$ and $K_H$, respectively.
Let $H_0$ denote the identity component of $H$.
A $G$-invariant differential form on $G/H_0$
is determined by the value at $1 \cdot H_0 \in G/H_0$,
and the value must be invariant under the action of the stabilizer
$H_0$, or equivalently, of $\h$. Thus, the space
$\Omega(G/H_0)^G$ of $G$-invariant differential forms on
$G/H_0$ is naturally identified with $(\Lambda(\g/\h)^\ast)^\h$,
and the exterior differential $d$ on $G/H_0$ can be seen
as a differential on $(\Lambda(\g/\h)^\ast)^\h$.
The relative Lie algebra cohomology $H^\bl(\g, \h; \R)$
is the cohomology of the differential graded algebra
$((\Lambda(\g/\h)^\ast)^\h, d)$.

\begin{rem}\label{rank:rem:cptconn}
Suppose that $G$ is a connected compact
Lie group with Lie algebra $\g$ and
$H$ is a connected closed subgroup of $G$ with Lie algebra $\h$.
Then, the inclusion
$(\Lambda (\g/\h)^\ast)^\h \simeq \Omega(G/H)^G \injto \Omega(G/H)$
induces an isomorphism between the relative Lie algebra cohomology
$H^\bl(\g, \h; \R)$ and the de Rham cohomology $H^\bl(G/H; \R)$
(see e.g.\ \cite[Ch.~I]{Che-Eil48}).
\end{rem}

We use the following cohomological obstruction to the existence of
compact Clifford--Klein forms, which was proved in \cite{Mor15JDG}
and extended to the locally modelled case in \cite{Mor15+}.

\begin{fact}\label{rank:fact:previous}
Let $G/H$ be a homogeneous space of reductive type.
If the homomorphism
$i: H^\bl(\g, \h; \R) \to H^\bl(\g, \k_H; \R)$
induced from the inclusion
$(\Lambda (\g/\h)^\ast)^\h \injto (\Lambda (\g/\k_H)^\ast)^{\k_H}$
is not injective, then there exist no
compact manifolds locally modelled on the homogeneous space $G/H$
(and, in particular, there exist no compact Clifford--Klein forms of $G/H$).
\end{fact}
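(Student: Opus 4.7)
The plan is to establish the contrapositive: assuming a compact manifold $M$ locally modelled on $G/H$ exists, I will show that $i$ is injective.

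\textbf{Step 1: DGA map from invariant forms to $\Omega(M)$.} A $(G, G/H)$-structure on $M$ is equivalent to a pair consisting of a developing map $\operatorname{dev}: \widetilde M \to G/H$ and a holonomy homomorphism $\rho: \pi_1(M) \to G$ with respect to which $\operatorname{dev}$ is equivariant. A $G$-invariant differential form on $G/H$ pulls back along $\operatorname{dev}$ to a $\pi_1(M)$-invariant form on $\widetilde M$ (via $\rho$), which descends to a form on $M$; this yields a DGA homomorphism $\Phi: (\Lambda(\g/\h)^\ast)^\h \to \Omega(M)$. Let $j: H^\bl(\g, \h; \R) \to H^\bl(M; \R)$ be the induced cohomology map.

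\textbf{Step 2: factorization through $i$.} The $(G, G/H)$-structure is equivalent to a principal $H$-bundle $P \to M$ equipped with a flat Cartan connection. Since $H/K_H$ is contractible, the projection $\pi: M' := P/K_H \to M$ is a fiber bundle with contractible fibers, hence a homotopy equivalence. Viewing $P \to M'$ as a principal $K_H$-bundle carrying the same Cartan form, $M'$ inherits a flat $(G, G/K_H)$-structure, producing an analogous DGA map $\Phi': (\Lambda(\g/\k_H)^\ast)^{\k_H} \to \Omega(M')$. These are compatible via the inclusion of invariant forms and $\pi^\ast$, giving a commutative square. Letting $p: H^\bl(\g, \k_H; \R) \to H^\bl(M'; \R)$ be the cohomology map from $\Phi'$, and using that $\pi^\ast$ is an isomorphism, one obtains $j = (\pi^\ast)^{-1} \circ p \circ i$. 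Hence $j$ factors through $i$, and injectivity of $j$ will imply injectivity of $i$.

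\textbf{Step 3: injectivity of $j$ via Poincar\'e duality.} Assuming $M$ and $G/H$ orientable (otherwise pass to an orientation cover), set $n = \dim G/H$. A nonzero element of the one-dimensional space $(\Lambda^n(\g/\h)^\ast)^\h$ is a $G$-invariant volume form on $G/H$; being nowhere vanishing, it pulls back under $\Phi$ to a nowhere-vanishing top form on $M$ with nonzero integral, so $j^n: H^n(\g, \h; \R) \to H^n(M; \R)$ is injective. For a general nonzero class $[\omega] \in H^k(\g, \h; \R)$, I would invoke Poincar\'e duality for the relative Lie algebra cohomology of the reductive pair $(\g, \h)$---which follows either from the nondegeneracy of the $\h$-invariant wedge pairing on $\Lambda(\g/\h)^\ast$ via complete reducibility, or by transferring Poincar\'e duality from the compact dual homogeneous space---to produce $[\beta] \in H^{n-k}(\g, \h; \R)$ with $[\omega \w \beta] \neq 0$ in $H^n(\g, \h; \R)$. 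Multiplicativity of $j$ together with injectivity of $j^n$ then force $j(\omega) \cup j(\beta) = j(\omega \w \beta) \neq 0$, whence $j(\omega) \neq 0$.

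The main obstacle will be Step~3, specifically the Poincar\'e duality statement for $H^\bl(\g, \h; \R)$ and the verification that it meshes with the integration pairing on $M$. Orientability subtleties can be handled by passing to a finite cover. The reductive-type hypothesis is essential throughout, as it provides both the complete reducibility used for algebraic Poincar\'e duality and the flat Cartan geometry underlying Steps~1 and~2.
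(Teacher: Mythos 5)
This paper does not actually prove Fact~\ref{rank:fact:previous}: it is quoted from \cite{Mor15JDG} and its extension to locally modelled manifolds in \cite{Mor15+}, and your proposal essentially reconstructs the argument of those references (push invariant forms to $M$ through the developing map, pass to the associated bundle with contractible fibre $H/K_H$ to factor the map through $i$, then get injectivity from the invariant volume class together with Poincar\'e duality of $H^\bl(\g,\h;\R)$). Two points need tightening before this is a complete proof. First, $(\Lambda(\g/\h)^\ast)^\h$ is identified with $\Omega(G/H_0)^G$, not with $\Omega(G/H)^G$; since $H$ is allowed to be disconnected, your $\Phi$ in Step~1 is a priori defined only on the $H$-invariant forms, which may be a proper subspace. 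The standard repair is to pull the finite covering $G/H_0 \to G/H$ back along the developing map, replacing $M$ by a compact manifold locally modelled on $G/H_0$ (this changes neither $H^\bl(\g,\h;\R)$, $H^\bl(\g,\k_H;\R)$ nor $i$); the same remark applies to $K_H$ versus $\k_H$ on $M'$. After this reduction the orientability worry in Step~3 also disappears, because the $\h$-action on $\Lambda^{\mathrm{top}}(\g/\h)^\ast$ is trivial for a reductive pair, so the pulled-back invariant volume form is nowhere vanishing and itself orients the cover. Second, nondegeneracy of the wedge pairing at the cochain level does not by itself yield Poincar\'e duality on cohomology: one also needs the compatibility of $d$ with the pairing (a unimodularity-type statement), so of your two suggested justifications the reliable one is the transfer from the compact dual, i.e.\ $H^\bl(\g,\h;\K) \simeq H^\bl(\g_u,\h_u;\K) \simeq H^\bl(G_u/H_u;\K)$ with $G_u/H_u$ compact, connected and orientable. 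With these repairs your proof is correct and is, in strategy, the same as the cited one.
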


Recall that, for a reductive Lie algebra $\g$, the graded vector space
$P_{\g^\ast}$ defined by
\[
P_{\g^\ast} = \{ \alpha \in (\Lambda^+ \g^\ast)^\g :
\text{$\alpha(x \w y) = 0$ for all $x, y \in (\Lambda^+ \g)^\g$} \}
\]
is called the space of primitive elements in $(\Lambda \g^\ast)^\g$
(see Section~\ref{rank:subsect:trans}), where $\Lambda^+$
denotes the positive degree part of the exterior algebra.
We prove the following result in this paper,
which leads to the affirmative solution of
Conjecture~\ref{rank:conj:rank}.

\begin{thm}[Theorem~\ref{rank:thm:main}
$\textup{(i)} \Leftrightarrow \textup{(vii)}$]
\label{rank:thm:main_short}
Let $G/H$ be a homogeneous space of reductive type.
Then, the homomorphism
$i : H^\bl(\g, \h; \R) \to H^\bl(\g, \k_H; \R)$
is injective if and only if the linear map
$\rest: (P_{\g^\ast})^{-\theta} \to (P_{\h^\ast})^{-\theta}$
induced from the restriction map
$(\Lambda \g^\ast)^\g \to (\Lambda \h^\ast)^\h$ is surjective,
where $(\, \cdot \, )^{-\theta}$
denotes the $(-1)$-eigenspace for $\theta$.
\end{thm}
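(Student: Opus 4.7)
The plan is to rewrite the relative Lie algebra cohomologies on both sides of $i$ using the Cartan--Chevalley--Koszul--Weil Sullivan model for reductive pairs, and then to analyse the comparison map via the $\theta$-eigenspace decomposition of the resulting Koszul complex.

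By the Cartan--Chevalley--Koszul--Weil theorem, for the reductive pair $(\g, \h)$ there is a natural quasi-isomorphism between $((\Lambda(\g/\h)^\ast)^\h, d)$ and a Koszul-type DGA $A_\h := ((S\h^\ast)^\h \ox \Lambda P_{\g^\ast}, d_\h)$, where $d_\h$ vanishes on $(S\h^\ast)^\h$ and sends $p \in P_{\g^\ast}$ to the Chern--Weil element $\tau_\h(p) \in (S\h^\ast)^\h$ obtained by composing the transgression $P_{\g^\ast} \simto W_{\g^\ast} \injto (S\g^\ast)^\g$ with the restriction of polynomials $(S\g^\ast)^\g \to (S\h^\ast)^\h$. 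The analogous Sullivan model $A_{\k_H}$ exists for $(\g, \k_H)$, and $i$ corresponds to the chain map $A_\h \to A_{\k_H}$ induced on the first tensor factor by the restriction $r : (S\h^\ast)^\h \to (S\k_H^\ast)^{\k_H}$.

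Next, exploit the Cartan involution $\theta$, which acts on every object above and commutes with the structure maps. Since $\theta$ restricts to the identity on $\k_H$, it acts trivially on $(S\k_H^\ast)^{\k_H}$, and a short computation gives $r(\theta(f)) = r(f)$ for every $f \in (S\h^\ast)^\h$, so $r$ annihilates the whole of $(S\h^\ast)^{\h, -\theta}$. A central input of the argument will be the sharper statement that $\ker r$ equals the ideal of $(S\h^\ast)^\h$ generated by $(S^+\h^\ast)^{\h, -\theta}$, together with the natural identification, via the intrinsic transgression $P_{\h^\ast} \simto W_{\h^\ast}$, of the generators of this ideal with $(P_{\h^\ast})^{-\theta}$. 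This refinement is essentially Chevalley's restriction theorem for the symmetric pair $(\h, \k_H)$.

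Granted this, the short exact sequence $0 \to (\ker r) \ox \Lambda P_{\g^\ast} \to A_\h \to A_{\k_H} \to 0$ and its associated long exact sequence reduce the injectivity of $i$ to the vanishing of every cohomology class of $A_\h$ supported on $\ker r$. Filtering by the $S^+\h^\ast$-degree and passing to the associated graded, the only obstructions come from the generators of $\ker r$, which are $\theta$-antiinvariant and indexed by $(P_{\h^\ast})^{-\theta}$. Such a generator $\sigma_\h(q)$ is a coboundary iff it equals $\tau_\h(p) = d_\h(p)$ for some $p \in P_{\g^\ast}$ of matching $\theta$-parity, i.e.\ $p \in (P_{\g^\ast})^{-\theta}$. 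Via the naturality $\sigma_\h \circ \rest = r_\g \circ \sigma_\g$ of the transgression with respect to inclusion of reductive subalgebras, this is exactly the surjectivity of $\rest : (P_{\g^\ast})^{-\theta} \to (P_{\h^\ast})^{-\theta}$.

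The hard part will be the refinement of $\ker r$ and the careful bookkeeping of $\theta$-parities in passing between primitives $P_{\bl}$, the generators $W_{\bl}$ of polynomial invariants, and their ideals; in particular, verifying the naturality of the transgression under restriction and matching the map on generators of $\ker r$ induced by $d_\h$ with $\rest$ under the canonical transgression isomorphisms.
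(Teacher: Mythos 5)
Your overall strategy (Sullivan model for the reductive pair, transgression, $\theta$-eigenspace bookkeeping) is the same as the paper's, but the core of your argument has a genuine gap. The short exact sequence $0 \to (\ker r) \ox \Lambda P_{\g^\ast} \to A_\h \to A_{\k_H} \to 0$ is not exact: the restriction $r : (S\h^\ast)^\h \to (S\k_H^\ast)^{\k_H}$ is in general not surjective (for $(\h,\k_\h)$ of the type $(\mathfrak{u}(2m),\mathfrak{so}(2m))$ the Pfaffian is not in the image), so the long exact sequence you want does not exist. More seriously, even after repairing this, injectivity of $i$ is \emph{not} equivalent to the vanishing of cohomology classes ``supported on $\ker r$'': a nonzero class of $A_\h$ can map to a nonzero \emph{coboundary} of $A_{\k_H}$ without admitting any representative in $(\ker r)\ox\Lambda P_{\g^\ast}$. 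This is exactly the difficulty the paper addresses by proving the collapse of the spectral sequence of the two-stage model (condition (viii)), which in turn relies on knowing the full multiplicative structure of the fibre cohomology $H^\bl(\Lambda P_{\h^\ast}\ox(S\shift{\k_\h^\ast})^{\k_\h})$ as $\Lambda(P_{\h^\ast})^{-\theta}\ox\im w'$ for the symmetric pair $(\h,\k_\h)$ (Fact~\ref{rank:fact:symmetric}~(2)). Your ``filter by $S^+\h^\ast$-degree and pass to the associated graded'' does not supply this input, and it is the hard direction (surjectivity of $\rest$ implies injectivity of $i$) that fails without it.

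Two further points you defer to ``bookkeeping'' are in fact substantive. First, the correct criterion for $1\ox\sigma_\h(q)$ to be a coboundary in $A_\h$ is membership of $\sigma_\h(q)$ in the ideal $(S^+\g^\ast)^\g|_\h\cdot(S\h^\ast)^\h$ (the kernel of the Chern--Weil homomorphism, Proposition~\ref{rank:prop:kernel_Chern_Weil}), not equality with $\tau_{\g,\h}(p)$ for a single primitive $p$; passing from ``lies in that ideal'' to ``is the restriction of one anti-invariant primitive modulo decomposables'' requires the $\theta$-averaging argument and the Cartan-map identification of $P$ with the indecomposables $(S^+)^{\bl}/((S^+)^{\bl})^2$ (the paper's steps (v) $\Leftrightarrow$ (vi) $\Leftrightarrow$ (vii)). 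Second, the ``naturality'' $\sigma_\h\circ\rest = r\circ\sigma_\g$ you invoke is false on the nose: transgressions are not natural under restriction to a reductive subalgebra, and only the induced maps on indecomposables commute with $\rest$ on primitives. Your auxiliary claim that $\ker r$ is the ideal generated by $((S^+\h^\ast)^\h)^{-\theta}$ is also not ``Chevalley restriction'' (which concerns restriction to a Cartan subalgebra or to the $(-1)$-eigenspace) and is not needed in the paper's argument, which only uses the elementary inclusion $((S\h^\ast)^\h)^{-\theta}\subset\ker r$ in the easy direction.
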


\begin{rem}[cf. Remark~\ref{rank:rem:main_rephrase}]
\label{rank:rem:main_rephrase_short}
In view of Remark~\ref{rank:rem:cptconn},
we can rephrase Theorem~\ref{rank:thm:main_short} as follows:
\textit{Let $G$ be a connected compact Lie group
with Lie algebra $\g$ and
$H$ a connected closed subgroup of $G$ with Lie algebra $\h$.
Let $\theta$ be an involution of $G$ such that $\theta(H) = H$.
Put $K_H = H^\theta$. Then, the homomorphism
$\pi^\ast : H^\bl(G/H; \R) \to H^\bl(G/K_H; \R)$
induced from the projection $\pi : G/K_H \to G/H$
is injective if and only if
the linear map
$\rest : (P_{\g^\ast})^{-\theta} \to (P_{\h^\ast})^{-\theta}$
is surjective.}
\end{rem}

Theorem~\ref{rank:thm:main_short} enables us to check easily
if the assumption of Fact~\ref{rank:fact:previous}
is satisfied or not.
Conjecture~\ref{rank:conj:rank} follows immediately from
Fact~\ref{rank:fact:previous}, Theorem~\ref{rank:thm:main_short}
and the fact that $\dim (P_{\g^\ast})^{-\theta} = \rank G - \rank K$
(Fact~\ref{rank:fact:symmetric}~(1)), as we shall explain in
Section~\ref{rank:subsect:proof_short}.

The proof of Theorem~\ref{rank:thm:main_short}
is based on the theory of H.\ Cartan, C.\ Chevalley, J.-L.\ Koszul
and A.\ Weil (\cite{Car51b}) that gives an easy way to compute
the relative Lie algebra cohomology $H^\bl(\g, \h; \R)$
of a reductive pair $(\g, \h)$.
In modern terminology of Sullivan's rational homotopy theory
(initiated by \cite{Sul77}), what they actually did is the construction
of a pure Sullivan model for the differential graded algebra
$((\Lambda(\g/\h)^\ast)^\h, d)$ from a transgression for $\g$.
By this theory, the proof is reduced to computations of
invariant polynomials and
a spectral sequence for pure Sullivan algebras.

\begin{rem}
For the proof of Conjecture~\ref{rank:conj:rank},
it is enough to show the ``only if'' part of
Theorem~\ref{rank:thm:main_short}
(i.e.\ Theorem~\ref{rank:thm:main} (i) $\Rightarrow$ (vii)).
However, we believe that Theorem~\ref{rank:thm:main_short}
itself is rather interesting in its own right,
and thus we also give the proof of the ``if'' part
(i.e.\ Theorem~\ref{rank:thm:main} (vii) $\Rightarrow$ (i))
in this paper.
\end{rem}

\begin{rem}\label{rank:rem:earlier}
Kobayashi and Ono proved Conjecture~\ref{rank:conj:rank}
in the case of $\rank G = \rank H$, investigating the Euler class
of the tangent bundle of a compact Clifford--Klein form
(\cite[Cor.~5]{Kob-Ono90}, \cite[Prop.~4.10]{Kob89Ann}).
Fact~\ref{rank:fact:previous} can be regarded as an extension of
their results to all the Chern--Weil characteristic classes
(cf.\ Theorem~\ref{rank:thm:main}
$\text{(i)} \Leftrightarrow \text{(ii)}$
and \cite[Prop.~6.1]{Mor15JDG}).
\end{rem}

\begin{rem}\label{red:rem:tholozan}
Tholozan (\cite[Ver.~2]{Tho15+}, \cite{Tho+}) independently proved
Conjecture~\ref{rank:conj:rank}.
The strategy of his proof and ours are similar;
his proof is based on a new cohomological obstruction to
the existence of compact Clifford--Klein forms,
which is a generalization of Fact~\ref{rank:fact:previous}.
It seems that his proof cannot be applied to the case of
manifolds locally modelled on $G/H$ because his new obstruction is
established only for compact Clifford--Klein forms.
However, we are not sure if it is an essential difference or not.
Indeed, as far as the author knows, a compact manifold
locally modelled on a homogeneous space of reductive type
has not been found, other than compact Clifford--Klein forms.
\end{rem}

The organization of this paper is as follows.
In Section~\ref{rank:sect:Sullivan},
we recall the definition of pure Sullivan algebras and
construct a spectral sequence arising from a homomorphism
of pure Sullivan algebras.
In Section~\ref{rank:sect:Cartan}, we recall the theory of
transgressions for a reductive Lie algebra and
the Sullivan model for a reductive pair, mostly without proof, and
apply the spectral sequence constructed in
Section~\ref{rank:sect:Sullivan} to this setting.
In Section~\ref{rank:sect:proof}, we give the proofs of
Theorem~\ref{rank:thm:main_short} and
Conjecture~\ref{rank:conj:rank}
using results in Section~\ref{rank:sect:Cartan}.

\section{Preliminaries on pure Sullivan algebras}
\label{rank:sect:Sullivan}

In this section,
we first recall the general definition of pure Sullivan algebras.
As we shall see in Section~\ref{rank:sect:Cartan},
the relative Lie algebra cohomology of a reductive pair
is computed by a certain pure Sullivan algebra.
We then construct a spectral sequence defined for
a homomorphism of pure Sullivan algebras of the form
$1 \ox g : (\Lambda U \ox S\shift{V}, -\delta_f)
\to (\Lambda U \ox S\shift{W}, -\delta_{gf})$,
which will be used in the proof of Theorem~\ref{rank:thm:main_short}
(cf.\ Theorem~\ref{rank:thm:main} (viii)).
We refer to \cite{Sul77} and \cite{FHT01}
for further results on Sullivan algebras.

Since Theorem~\ref{rank:thm:main_short}
is a purely algebraic theorem,
we work over an arbitrary field $\K$ of characteristic $0$,
rather than over $\R$, in the rest of this paper.
There are two gradings on the exterior algebra $\Lambda V$
of a graded vector space $V$, namely,
the one defined as in the ungraded case and
the one induced from the grading on $V$.
We write $\Lambda V = \bigoplus_p \Lambda^p V$ for the former grading and
$\Lambda V = \bigoplus_p (\Lambda V)^p$ for the latter.
Unless otherwise specified,
we regard $\Lambda V$ as a graded algebra by the latter grading.
We use the notation
$\Lambda^+ V$ for the positive degree part of $\Lambda V$
with respect to the former grading.
It is also the positive degree part of the latter grading
if $V$ is positively graded, which is always the case in this paper.
We define $(S V)^p$, $S^p V$ and $S^+ V$ in the same way.
Given a graded vector space $V$,
we define a new graded vector space $\shift{V}$
by $\shift{V} = V[-1]$,
i.e.\ by putting $\shift{V}^n = V^{n-1}$ for each $n \in \Z$.
We write $\shift{v}$ for the element of $\shift{V}$
corresponding to $v \in V$.
Similarly, we write $\shift{Q}$ for the element of $S\shift{V}$
corresponding to $Q \in S V$.
For $v \in V$, we denote by $\ep(v)$ and $\mu(v)$
the left multiplications by $v$ on $\Lambda V$ and $SV$,
respectively.
For $\alpha \in V^\ast$, we denote by
$\iota(\alpha)$ and $\partial(\alpha)$
the derivations of $\Lambda V$ and $SV$ uniquely determined by
$\iota(\alpha)v = \alpha(v)$ and
$\partial(\alpha)v = \alpha(v)$ ($v \in V$), respectively.
We always use the Koszul sign convention, namely, we multiply by
$(-1)^{pq}$ when we interchange two objects of homogeneous degrees
$p$ and $q$, respectively.

\subsection{Pure Sullivan algebras}\label{rank:subsect:Sullivan_def}

Let $U = \bigoplus_{n \geq 1} U^{2n-1}$
and $V = \bigoplus_{n \geq 1} V^{2n-1}$
be finite-dimensional, oddly and positively graded vector spaces.
Let $f : S\shift{U} \to S\shift{V}$
be a graded algebra homomorphism.
Define a differential $\delta_f$ on a graded algebra
$\Lambda U \ox S\shift{V}$ by the formula
\[
\delta_f = \sum_i \iota( e^i ) \ox \mu( f( \shift{e_i} ) ),
\]
where $(e_i)_i$ is a basis of $U$ and
$(e^i)_i$ the basis of $U^\ast$ dual to $(e_i)_i$.
It is called the Koszul differential associated with $f$.
In other words, the Koszul differential $\delta_f$
is the unique derivation satisfying
\[
\delta_f(u \ox 1) = 1 \ox f(\shift{u}), \quad
\delta_f(1 \ox \shift{v}) = 0 \qquad (u \in U,\ v \in V).
\]
Thus, $\delta_f$ does not depend on the choice of a basis
$(e^i)_i$, and we have $\delta_f^2 = 0$.
A differential graded algebra of the form
$( \Lambda U \ox S \shift{V}, -\delta_f )$
is called a pure Sullivan algebra.

\begin{rem}
The minus sign in our definition of a pure Sullivan algebra
is inserted just for convenience and is not essential. Indeed,
$1 \ox \sgn : (\Lambda U \ox S \shift{V}, -\delta_f)
\simto (\Lambda U \ox S \shift{V}, \delta_f)$
is an isomorphism of differential graded algebras,
where $\sgn$ denotes the automorphism of $S \shift{V}$ defined by
$\sgn|_{S^p \shift{V}} = (-1)^p$.
\end{rem}

The Koszul differential on $\Lambda V \ox S\shift{V}$
associated with the identity map $1_{S\shift{V}}$ on $S\shift{V}$
is denoted by $\delta_V$ instead of $\delta_{1_{S\shift{V}}}$.

\subsection{A spectral sequence for pure Sullivan algebras}
\label{rank:subsect:rel_Sullivan}
Let $U$, $V$ and $W$
be finite-dimensional, oddly and positively graded vector spaces.
Let $f : S\shift{U} \to S\shift{V}$ and
$g : S\shift{V} \to S\shift{W}$ be graded algebra homomorphisms.
Then,
\[
1 \ox g : (\Lambda U \ox S\shift{V}, -\delta_f)
\to (\Lambda U \ox S\shift{W}, -\delta_{gf})
\]
is a differential graded algebra homomorphism.

The Koszul differential
$\delta_f$ on $\Lambda U \ox S \shift{V}$
can be extended to the differential
$\delta_f \ox 1 \ox 1$ on
$\Lambda U \ox S \shift{V} \ox \Lambda V \ox S \shift{W}$.
By abuse of notation,
we abbreviate $\delta_f \ox 1 \ox 1$ to $\delta_f$.
Similarly, the Koszul differentials
$\delta_g$ on $\Lambda V \ox S \shift{W}$,
$\delta_{gf}$ on $\Lambda U \ox S \shift{W}$ and
$\delta_V$ on $\Lambda V \ox S\shift{V}$
are naturally extended to the differentials on
$\Lambda U \ox S \shift{V} \ox \Lambda V \ox S \shift{W}$,
which we shall denote by the same symbols.
We define a differential graded algebra homomorphism
\[
m : (\Lambda U \ox S \shift{V} \ox \Lambda V \ox S \shift{W},
-\delta_f - \delta_g + \delta_V)
\to (\Lambda U \ox S \shift{W}, -\delta_{gf})
\]
by
\begin{align*}
m(\phi \ox \shift{Q} \ox \psi \ox \shift{R}) &= 0
&(\phi \in \Lambda U,\ Q \in SV,\
\psi \in \Lambda^+ V,\ R \in SW), \\
m(\phi \ox \shift{Q} \ox 1 \ox \shift{R}) &=
\phi \ox g(\shift{Q})\shift{R}
&(\phi \in \Lambda U,\ Q \in SV,\ R \in SW).
\end{align*}
\begin{prop}\label{rank:prop:rel_Sullivan}
The homomorphism $m$ is a Sullivan model for the homomorphism
$1 \ox g : (\Lambda U \ox S\shift{V}, -\delta_f)
\to (\Lambda U \ox S\shift{W}, -\delta_{gf})$, i.e.\
\begin{enumerate}
\item[\textup{(i)}] The diagram
\[
\xymatrix{
(\Lambda U \ox S \shift{V}, -\delta_f)
\ar[r]^{1 \ox g} \ar[dr]^{i} &
(\Lambda U \ox S \shift{W}, -\delta_{gf}) \\
&
(\Lambda U \ox S \shift{V} \ox \Lambda V \ox S \shift{W},
-\delta_f - \delta_g + \delta_V) \ar[u]^{m}
}
\]
commutes, where $i$ is the natural inclusion.
\item[\textup{(ii)}] It induces an isomorphism in cohomology:
\[
m : H^\bl(\Lambda U \ox S \shift{V} \ox \Lambda V \ox S \shift{W},
-\delta_f - \delta_g + \delta_V)
\simto H^\bl(\Lambda U \ox S \shift{W}, -\delta_{gf}).
\]
\end{enumerate}
\end{prop}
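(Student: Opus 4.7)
The proposition splits naturally. Part (i) reduces to checking on generators that $m$ is an algebra homomorphism intertwining $D = -\delta_f - \delta_g + \delta_V$ with $-\delta_{gf}$; the nontrivial generator is $u \in U$, where both $m(Du)$ and $-\delta_{gf}(mu)$ equal $-gf(\shift u)$. Commutativity of the triangle is immediate from $m(\phi \ox \shift Q \ox 1 \ox 1) = \phi \ox g(\shift Q)$.

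For (ii), my plan is to filter both $A := \Lambda U \ox S\shift V \ox \Lambda V \ox S\shift W$ and its target by $\Lambda U$-degree: $F_p = \bigoplus_{q \leq p} \Lambda^q U \ox \cdots$. Since $\delta_f$ and $\delta_{gf}$ strictly decrease this degree while $\delta_g$ and $\delta_V$ preserve it, both filtrations are compatible with the differentials, they are bounded because $U$ is finite-dimensional, and $m$ preserves $\Lambda U$-degree. On the target $d_0 = 0$ and $E_1 = \Lambda U \ox S\shift W$ with $d_1 = -\delta_{gf}$. On the source $d_0 = -\delta_g + \delta_V$, so the $E_1$-computation reduces to
\[
H^\bl(S\shift V \ox \Lambda V \ox S\shift W,\ -\delta_g + \delta_V) \cong S\shift W,
\]
which I would realise by the restriction $m'$ of $m$ to $\Lambda^0 U$-terms.

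The key technical ingredient is the graded algebra automorphism $\Psi$ of $S\shift V \ox \Lambda V \ox S\shift W$ defined on generators by $\Psi(\shift v) = \shift v - g(\shift v)$, $\Psi(v) = v$, $\Psi(\shift w) = \shift w$. A direct check on generators gives $\Psi \circ \delta_V \circ \Psi^{-1} = -\delta_g + \delta_V$, so $\Psi$ is an isomorphism of differential graded algebras from the classical Koszul complex $(\Lambda V \ox S\shift V, \delta_V)$ tensored trivially with $S\shift W$ onto the complex in question. Koszul acyclicity gives cohomology $S\shift W$, and a generator-by-generator check shows $\pi \circ \Psi^{-1} = m'$, where $\pi$ is the obvious augmentation-style projection; hence $m'$ is a quasi-isomorphism and $E_1^A \cong \Lambda U \ox S\shift W$ via $1 \ox m'$.

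Under this identification $m$ becomes the identity on $E_1$, and tracing $-\delta_f$ through $m'$ (representing $\phi \ox \shift R$ by $\phi \ox 1 \ox 1 \ox \shift R$ and applying $D$) shows that the source's $d_1$ is also $-\delta_{gf}$, matching the target. An isomorphism at $E_1$ therefore propagates to $E_\infty$, and bounded convergence yields an isomorphism on cohomology, proving (ii). The main obstacle is the change-of-variables lemma via $\Psi$; the rest is standard spectral-sequence bookkeeping.
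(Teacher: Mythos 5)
Your argument is correct, but it takes a genuinely different route from the paper's, which proves (ii) with no spectral sequence at all: the paper constructs an explicit graded algebra automorphism $\phi$ of the whole four-fold tensor product, conjugating the simpler differential $-\delta_{gf}+\delta_V$ into $-\delta_f-\delta_g+\delta_V$ and satisfying $m\phi=\pi$, and then concludes from the contracting homotopy $\delta_V\kappa+\kappa\delta_V=1-\pi_{0,0}$ of the Koszul complex. Your substitution $\Psi(\shift{v})=\shift{v}-g(\shift{v})$ is exactly the ``fiber part'' of the paper's $\phi$ (compare $\phi(1\ox\shift{v}\ox1\ox1)=1\ox\shift{v}\ox1\ox1-1\ox1\ox1\ox g(\shift{v})$); the delicate part of the paper's construction is the correction term $\kappa\sum_{p}(\delta_g\kappa)^p(1\ox f(\shift{u})\ox1\ox1)$ needed to make $\phi$ a chain map on the generators of $U$, and this is precisely what your filtration by $\Lambda U$-degree lets you avoid, at the price of having to identify $d_1$ with $-\delta_{gf}$ and invoke the comparison theorem for bounded filtrations (unproblematic here, since the filtration has length $\dim U+1$ and each total degree is finite-dimensional). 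Both proofs ultimately rest on the same input, the acyclicity of $(\Lambda V\ox S\shift{V},\delta_V)$. Two remarks: the paper's method yields an explicit inverse quasi-isomorphism (the inclusion $i$, with $\phi^{-1}=\sum_k(1-\phi)^k$), which your argument does not produce; and your filtration is coarser than the one used afterwards in Proposition~\ref{rank:prop:ss} (which filters by total $(\Lambda U\ox S\shift{V})$-degree), so your spectral sequence is not the one needed later and does not replace that construction.
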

\begin{rem}
The nilpotency condition on differential (\cite[p.~181]{FHT01})
is always satisfied in this situation.
\end{rem}
Proposition~\ref{rank:prop:rel_Sullivan} should be known to experts,
but we give its proof in
Section~\ref{rank:subsect:proof_rel_Sullivan}
for the sake of completeness.

Let us define a filtration $(F^p)_{p \in \N}$
of the differential graded algebra
$(\Lambda U \ox S \shift{V} \ox \Lambda V \ox S \shift{W},
-\delta_f - \delta_g + \delta_V)$
by
\[
F^p = \bigoplus_{k \geq p}
(\Lambda U \ox S \shift{V})^k \ox \Lambda V \ox S \shift{W}.
\]
The next proposition is easily obtained from
routine computations and the identification
$m : H^\bl(\Lambda U \ox S \shift{V} \ox \Lambda V \ox S \shift{W},
-\delta_f - \delta_g + \delta_V)
\simto H^\bl(\Lambda U \ox S \shift{W}, -\delta_{gf})$.
\begin{prop}\label{rank:prop:ss}
The spectral sequence $(E^{p,q}_r, d_r)$
associated with the filtration $(F^p)_{p \in \N}$
satisfies the following:
\begin{enumerate}
\item[\textup{(1)}]
$E^{p,q}_2 = H^p(\Lambda U \ox S \shift{V}, -\delta_f) \ox
H^q(\Lambda V \ox S \shift{W}, -\delta_g)$.
\item[\textup{(2)}]
The spectral sequence $(E_r^{p,q}, d_r)$ converges to
$H^{p+q}(\Lambda U \ox S \shift{W}, - \delta_{gf})$.
\item[\textup{(3)}] The homomorphism
$1 \ox g : H^p(\Lambda U \ox S \shift{V}, -\delta_f) \to
H^p(\Lambda U \ox S \shift{W}, -\delta_{gf})$
is factorized as
\[
H^p(\Lambda U \ox S \shift{V}, -\delta_f)
\simto E_2^{p,0}
\surjto E_\infty^{p,0}
\injto H^p(\Lambda U \ox S \shift{W}, -\delta_{gf}).
\]
\end{enumerate}
\end{prop}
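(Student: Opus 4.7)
The plan is to track the filtration degree of each constituent of $D := -\delta_f - \delta_g + \delta_V$ on $A := \Lambda U \ox S\shift{V} \ox \Lambda V \ox S\shift{W}$ with respect to $(F^p)$, and to read off the spectral sequence from this. A direct calculation shows that $\delta_g$ has filtration degree $0$ (it acts only on $\Lambda V \ox S\shift{W}$), $\delta_f$ has filtration degree $+1$ (as a derivation, it sends $u \in U^{2n-1}$ to $f(\shift{u}) \in (S\shift{V})^{2n}$), and $\delta_V$ has filtration degree $\geq +2$ (it sends $v \in V^{2n-1}$, lying in filtration $0$, to $\shift{v}$ of filtration $2n \geq 2$). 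Consequently $d_0 = -\delta_g$ and $d_1 = -\delta_f$, with the $\delta_V$-contribution delayed to $d_r$ for $r \geq 2$. Thus
\[
E_1^{p,q} = (\Lambda U \ox S\shift{V})^p \ox H^q(\Lambda V \ox S\shift{W}, -\delta_g),
\]
and applying $d_1 = -\delta_f$ on the first tensor factor yields (1).

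For (2), the filtration is exhaustive and bounded above on each total degree (from $F^{n+1} A^n = 0$, since $\Lambda V \ox S\shift{W}$ lives in non-negative degrees), so the spectral sequence converges strongly to the associated graded of $H^\bl(A, D)$. By Proposition~\ref{rank:prop:rel_Sullivan}~(ii), $H^\bl(A, D) \cong H^\bl(\Lambda U \ox S\shift{W}, -\delta_{gf})$, which gives the target.

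For (3), two observations combine: (a) $E_r^{p+r, -r+1} = 0$ for $r \geq 2$ (since $q$ would be negative), so every $d_r$ emanating from $E_r^{p,0}$ vanishes and we obtain the surjection $E_2^{p,0} \surjto E_\infty^{p,0}$; and (b) $F^{p+1} H^p(A) = 0$, so $E_\infty^{p,0} = F^p H^p(A)$ injects into $H^p(A)$. To identify the composite with $1 \ox g$, pick a representative $y \in (\Lambda U \ox S\shift{V})^p$ of a class $[y] \in E_2^{p,0}$, so $\delta_f(y) = 0$; viewed in $A$, the element $y$ is automatically a $D$-cocycle (since $\delta_g(y) = \delta_V(y) = 0$ trivially), and $m(y) = (1 \ox g)(y)$ by the defining formula for $m$. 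Combined with the commuting triangle in Proposition~\ref{rank:prop:rel_Sullivan}~(i), this identifies the composite in (3) with $1 \ox g$. The only mildly delicate point is the verification that $\delta_V$ contributes nothing to $d_1$, but this reduces to the trivial inequality $|\shift{v}| \geq 2$, so there is no real obstacle.
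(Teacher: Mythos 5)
Your proof is correct and takes the same route the paper intends: the paper explicitly omits the argument, calling it ``routine computations'' combined with the identification $m$ from Proposition~\ref{rank:prop:rel_Sullivan}~(ii), and your filtration-degree bookkeeping for $-\delta_g$ (degree $0$), $-\delta_f$ (degree $+1$) and $\delta_V$ (degree $\geq 2$), together with the vanishing-range and edge-homomorphism arguments for (2) and (3), is exactly that computation carried out in full.
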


\subsection{Proof of Proposition~\ref{rank:prop:rel_Sullivan}}
\label{rank:subsect:proof_rel_Sullivan}

The condition (i) is trivial. Let us verify the condition (ii).

For $(p,q) \in \N^2$, let $\pi_{p,q}$ denote the projection of
$\Lambda U \ox S \shift{V} \ox \Lambda V \ox S \shift{W}$ given by
\[
\pi_{p,q} = \begin{cases}
0 &\text{on} \quad
\Lambda U \ox S^{p'} \shift{V} \ox \Lambda^{q'} V \ox S \shift{W},\
(p',q') \neq (p,q), \\
1 &\text{on} \quad
\Lambda U \ox S^p \shift{V} \ox \Lambda^q V \ox S \shift{W}.
\end{cases}
\]
We write $\pi$ instead of $\pi_{0,0}$ when we regard $\pi_{0,0}$
as a map from
$\Lambda U \ox S \shift{V} \ox \Lambda V \ox S \shift{W}$ to
$\Lambda U \ox S \shift{W}$.
Define a linear endomorphism $\kappa$ of
$\Lambda U \ox S \shift{V} \ox \Lambda V \ox S \shift{W}$ by
\[
\kappa = \begin{cases}
{\displaystyle
\frac{1}{p+q}\sum_j 1 \ox \partial(\shift{f^j}) \ox \ep(f_j) \ox 1}
& \text{on} \quad \minibox{
$\Lambda U \ox S^p \shift{V} \ox \Lambda^q V \ox S \shift{W},$ \\
$(p,q) \neq (0,0),$} \\
0
& \text{on} \quad \Lambda U \ox \K \ox \K \ox S \shift{W},
\end{cases}
\]
where $(f_j)_j$ is a basis of $V$ and
$(f^j)_j$ the basis of $V^\ast$ dual to $(f_j)_j$.
One can easily show that
$\delta_V \kappa + \kappa \delta_V = 1 - \pi_{0,0}$
(see e.g.\ \cite[\S 3.1]{Gui-Ste99}). Since
\[
(\delta_g \kappa)
(\Lambda U \ox S^p \shift{V} \ox \Lambda^q V \ox S \shift{W})
\subset
\Lambda U \ox S^{p-1} \shift{V} \ox \Lambda^q V \ox S \shift{W},
\]
the infinite sum $\sum_{p=0}^\infty (\delta_g \kappa)^p$
is well-defined as a linear automorphism of
$\Lambda U \ox S \shift{V} \ox \Lambda V \ox S \shift{W}$,
whose inverse is $1-\delta_g \kappa$.
Define an endomorphism $\phi$ of the graded algebra
$\Lambda U \ox S \shift{V} \ox \Lambda V \ox S \shift{W}$ by
\begin{align*}
\phi(u \ox 1 \ox 1 \ox 1) &= u \ox 1 \ox 1 \ox 1 & \\
&\qquad + \kappa \sum_{p=0}^\infty (\delta_g \kappa)^p (1 \ox f(\shift{u}) \ox 1 \ox 1) &(u \in U), \\
\phi(1 \ox \shift{v} \ox 1 \ox 1)
&= 1 \ox \shift{v} \ox 1 \ox 1 - 1 \ox 1 \ox 1 \ox g(\shift{v}) &(v \in V), \\
\phi(1 \ox 1 \ox v \ox 1)
&= 1 \ox 1 \ox v \ox 1 &(v \in V), \\
\phi(1 \ox 1 \ox 1 \ox \shift{w})
&= 1 \ox 1 \ox 1 \ox \shift{w} &(w \in W).
\end{align*}
Then, $\phi$ has the following properties:

\begin{lem}\label{rank:lem:coord_change}
\begin{enumerate}
\item[\textup{(1)}]
$\phi (-\delta_{gf} + \delta_V)
= (-\delta_f - \delta_g + \delta_V) \phi$.
\item[\textup{(2)}]
For any
$x \in \Lambda U \ox S \shift{V} \ox \Lambda V \ox S \shift{W}$,
there exists $n \in \N$ such that $(1- \phi)^n x = 0$.
\item[\textup{(3)}]
$m\phi = \pi$.
\end{enumerate}
\end{lem}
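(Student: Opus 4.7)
The plan is to reduce each of (1)--(3) to identities on generators of the graded algebra $A := \Lambda U \ox S \shift{V} \ox \Lambda V \ox S \shift{W}$, which is freely generated as a graded-commutative algebra by $U \cup \shift{V} \cup V \cup \shift{W}$. The definition of $\phi$ specifies its values on exactly these generators.

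For (3), I would note that $\pi$ and $m$ are both graded algebra homomorphisms: $\pi$ is the composition of the augmentations of $S^+ \shift{V}$ and $\Lambda^+ V$ with the identity on $\Lambda U \ox S \shift{W}$, while $m$ is the composition of the augmentation of $\Lambda V$ with the algebra map $\Lambda U \ox S \shift{V} \ox S \shift{W} \to \Lambda U \ox S \shift{W}$ induced from $g$ and multiplication in $S \shift{W}$. Since $\phi$ is also an algebra homomorphism, checking $m \phi = \pi$ on the four types of generators suffices, and this is short: on $u \in U$, the $\kappa$-correction lies in $\Lambda^+ V$ and is killed by $m$; on $\shift{v} \in \shift{V}$, the two terms of $\phi(\shift{v})$ cancel under $m$; on $v \in V$ and $\shift{w} \in \shift{W}$, $\phi$ is the identity.

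For (2), I would take the multiplicative filtration $F^p A$ spanned by monomials with $\deg_{\Lambda V} + \deg_{S \shift{W}} \geq p$, where the degrees are word lengths in the $V$- and $\shift{W}$-generators respectively. On generators $1 - \phi$ strictly raises this filtration: the correction on $u$ lies in $\Lambda^1 V \subset F^1$, the correction on $\shift{v}$ is $-1 \ox 1 \ox 1 \ox g(\shift{v}) \in S^+ \shift{W} \subset F^1$ (since $g$ preserves augmentation), and $1 - \phi$ vanishes on $v$ and $\shift{w}$. Multiplicativity of the filtration, combined with $(1 - \phi)(xy) = (1 - \phi)(x) \phi(y) + x (1 - \phi)(y)$, propagates this to $(1 - \phi)(F^p A) \subseteq F^{p + 1} A$. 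Since each $V$- and $\shift{W}$-generator contributes positively to the cohomological degree, $F^{n + 1} A$ contains no element of degree $\leq n$; since $\phi$ preserves cohomological degree, $(1 - \phi)^{n + 1}$ annihilates any element of degree $\leq n$, giving local nilpotency.

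The hard part is (1). Both $\phi(-\delta_{gf} + \delta_V)$ and $(-\delta_f - \delta_g + \delta_V) \phi$ are \emph{$\phi$-skew-derivations} of $A$, i.e.\ linear maps $L$ satisfying $L(xy) = L(x) \phi(y) + (-1)^{|x|} \phi(x) L(y)$, so agreement on generators is enough. The cases $\shift{v}, v, \shift{w}$ are routine: for instance on $v \in V$ both sides equal $\shift{v} - 1 \ox 1 \ox 1 \ox g(\shift{v})$, using $\delta_V(v) = 1 \ox \shift{v} \ox 1 \ox 1$ and $\delta_g(v) = 1 \ox 1 \ox 1 \ox g(\shift{v})$. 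The case $u \in U$ is the main computation. Abbreviating $\tau := (1 - \delta_g \kappa)^{-1} = \sum_p (\delta_g \kappa)^p$, so that $\phi(u) = u + \kappa \tau (1 \ox f(\shift{u}) \ox 1 \ox 1)$, one expands the right-hand side using: (i) $\delta_f$ annihilates elements with no $\Lambda U$-component; (ii) $\delta_g \kappa \tau = \tau - \operatorname{id}$; (iii) the contracting homotopy identity $\delta_V \kappa + \kappa \delta_V = 1 - \pi_{0, 0}$ on the $(p, q) \neq (0, 0)$ part; and (iv) $\delta_V$ vanishes on the $\Lambda^0 V$ part. After cancellations the right-hand side collapses to $-\pi_{0, 0} \tau (1 \ox f(\shift{u}) \ox 1 \ox 1)$, and the identity reduces to the combinatorial fact
\[
\pi_{0, 0} \tau (1 \ox Q \ox 1 \ox 1) = 1 \ox 1 \ox 1 \ox g(Q), \qquad Q \in S \shift{V}.
\]
This is the main technical obstacle: for $Q \in S^m \shift{V}$ only the term $(\delta_g \kappa)^m$ contributes to $\pi_{0, 0} \tau$, and the normalising factor $\tfrac{1}{p + q}$ in $\kappa$ is tuned so that the $m!$ orderings produced by iterated application of $\delta_g \kappa$ combine — via the commutativity of $S \shift{W}$ — into $g(\shift{v_1}) \cdots g(\shift{v_m}) = g(Q)$ on a monomial $Q = \shift{v_1} \cdots \shift{v_m}$.
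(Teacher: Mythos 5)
Your proposal is correct and follows essentially the same route as the paper: all three parts are reduced to the generators $U$, $\shift{V}$, $V$, $\shift{W}$ via the (twisted) Leibniz rule, and the crux is exactly the identity $\pi_{0,0}\sum_p(\delta_g\kappa)^p(1 \ox \shift{Q} \ox 1 \ox 1) = 1 \ox 1 \ox 1 \ox g(\shift{Q})$, which the paper proves by induction using $\sum_j \mu(\shift{f^j})\partial(\shift{f_j}) = p$ on $S^p\shift{V}$ and you justify by the equivalent $1/m!$ symmetrization count. The only cosmetic difference is in (2), where you use a multiplicative filtration by word length in $V$ and $\shift{W}$ instead of the paper's observation that the locally $(1-\phi)$-nilpotent elements form a subalgebra containing the generators; both arguments are valid.
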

\begin{proof}
We identify $U$, $\shift{V}$, $V$ and $\shift{W}$
as graded subspaces of
$\Lambda U \ox S \shift{V} \ox \Lambda V \ox S \shift{W}$ in a natural way.

(1). Since both sides are derivations of
$\Lambda U \ox S \shift{V} \ox \Lambda V \ox S \shift{W}$,
it suffices to verify this equality on
$U$, $\shift{V}$, $V$ and $\shift{W}$.
The only nontrivial equality is
\[
\phi (-\delta_{gf} + \delta_V) (u \ox 1 \ox 1 \ox 1)
= (-\delta_f - \delta_g + \delta_V) \phi (u \ox 1 \ox 1 \ox 1)
\qquad (u \in U).
\]
The left-hand side is equal to $-1 \ox 1 \ox 1 \ox gf(\shift{u})$,
while the right-hand side is computed as
\begin{align*}
&(-\delta_f - \delta_g + \delta_V) \phi (u \ox 1 \ox 1 \ox 1) \\
=& -1 \ox f(\shift{u}) \ox 1 \ox 1
+ (-\delta_g + \delta_V) \kappa \sum_{p=0}^\infty
(\delta_g \kappa)^p (1 \ox f(\shift{u}) \ox 1 \ox 1) \\
=& (-1 + \delta_V \kappa) \sum_{p=0}^\infty
(\delta_g \kappa)^p (1 \ox f(\shift{u}) \ox 1 \ox 1) \\
=& -(\pi_{0,0} + \kappa \delta_V) \sum_{p=0}^\infty
(\delta_g \kappa)^p (1 \ox f(\shift{u}) \ox 1 \ox 1) \\
=& -\pi_{0,0} \sum_{p=0}^\infty
(\delta_g \kappa)^p (1 \ox f(\shift{u}) \ox 1 \ox 1) \\
=& -\sum_{p=0}^\infty
(\delta_g \kappa)^p \pi_{p,0} (1 \ox f(\shift{u}) \ox 1 \ox 1).
\end{align*}
Thus, it is enough to see that
\[
(\delta_g \kappa)^p(1 \ox \shift{Q} \ox 1 \ox \shift{R})
= 1 \ox 1 \ox 1 \ox g(\shift{Q}) \shift{R}
\qquad (Q \in S^p V,\ R \in S W) \tag{$\ast_p$}
\]
holds for every $p \in \N$. Obviously ($\ast_0$) is true.
Let us assume that ($\ast_{p-1}$) is true for some $p \geq 1$.
Then, for $Q \in S^p V$ and $R \in S W$,
\begin{align*}
(\delta_g \kappa)^p(1 \ox \shift{Q} \ox 1 \ox \shift{R})
&= \frac{1}{p}(\delta_g \kappa)^{p-1}\sum_j 1 \ox
\partial(\shift{f_j})\shift{Q} \ox 1 \ox g(\shift{f^j}) \shift{R} \\
&= 1 \ox 1 \ox 1 \ox g\left( \frac{1}{p} \sum_j
\mu(\shift{f^j})\partial(\shift{f_j})\shift{Q} \right) \shift{R}
\end{align*}
by the induction hypothesis. Since
$\sum_j \mu(\shift{f^j})\partial(\shift{f_j}) = p$
on $S^p \shift{V}$, we have
\[
1 \ox 1 \ox 1 \ox g\left( \frac{1}{p} \sum_j
\mu(\shift{f^j})\partial(\shift{f_j})\shift{Q} \right) \shift{R}
= 1 \ox 1 \ox 1 \ox g(\shift{Q}) \shift{R}.
\]
Hence ($\ast_p$) is also true.
This completes the proof of Lemma~\ref{rank:lem:coord_change}~(1).

(2). Put
$A =
\{ x \in \Lambda U \ox S \shift{V} \ox \Lambda V \ox S \shift{W} :
\text{$(1- \phi)^n x = 0$ for some $n \in \N$} \}$.
Notice that $A$ is a subalgebra of
$\Lambda U \ox S \shift{V} \ox \Lambda V \ox S \shift{W}$.
Indeed, the equality
$(1-\phi)(xx') = (1-\phi)(x) x' + \phi(x) (1-\phi)(x')$
implies that, if $(1-\phi)^n x = 0$ and $(1-\phi)^{n'} x' = 0$,
then $(1-\phi)^{n+n'-1} (xx') = 0$.
Therefore, it suffices to show that
$U, \shift{V}, V, \shift{W} \subset A$.
The inclusions $\shift{V}, V, \shift{W} \subset A$ are obvious.
This implies
$\K \ox S \shift{V} \ox \Lambda V \ox S \shift{W} \subset A$.
Now, $U \subset A$ follows from
$(1-\phi)(U) \subset
\K \ox S \shift{V} \ox \Lambda V \ox S \shift{W}$.

(3). Since both sides are graded algebra homomorphisms,
it suffices to verify this equality on
$U$, $\shift{V}$, $V$ and $\shift{W}$.
The only nontrivial equality is
\[
m\phi (u \ox 1 \ox 1 \ox 1)
= \pi (u \ox 1 \ox 1 \ox 1) \qquad (u \in U),
\]
which follows from $\pi \kappa = 0$.
\end{proof}
\noindent

Now, we resume the proof of Proposition~\ref{rank:prop:rel_Sullivan}.
By Lemma~\ref{rank:lem:coord_change},
\[
\phi : (\Lambda U \ox S \shift{V} \ox \Lambda V \ox S \shift{W},
- \delta_{gf} + \delta_V)
\simto (\Lambda U \ox S \shift{V} \ox \Lambda V \ox S \shift{W},
- \delta_f - \delta_g + \delta_V)
\]
is a differential graded algebra isomorphism, whose inverse is
$\sum_{k=0}^\infty (1-\phi)^k$, that makes the diagram
\[
\xymatrix{
(\Lambda U \ox S \shift{V} \ox \Lambda V \ox S \shift{W},
-\delta_{gf} + \delta_V) \ar[d]^{\wr}_{\phi} \ar[rd]_{\pi} & \\
(\Lambda U \ox S \shift{V} \ox \Lambda V \ox S \shift{W}, -\delta_f
- \delta_g + \delta_V) \ar[r]^{\hspace{1.8cm} m} &
(\Lambda U \ox S \shift{W}, -\delta_{gf})
}
\]
commute. Thus, it suffices to show that the projection
\[
\pi : (\Lambda U \ox S \shift{V} \ox \Lambda V \ox S \shift{W},
-\delta_{gf} + \delta_V)
\to (\Lambda U \ox S \shift{W}, -\delta_{gf})
\]
induces an isomorphism in cohomology. Let
\[
i : (\Lambda U \ox S \shift{W}, -\delta_{gf})
\to (\Lambda U \ox S \shift{V} \ox \Lambda V \ox S \shift{W},
-\delta_{gf} + \delta_V)
\]
denote the natural inclusion. We have $\pi i = 1$ and
\[
i \pi = \pi_{0,0} = 1 - \delta_V \kappa - \kappa \delta_V
= 1 - (-\delta_{gf} + \delta_V) \kappa
- \kappa (-\delta_{gf} + \delta_V).
\]
Therefore,
$\pi :
H^\bl(\Lambda U \ox S \shift{V} \ox \Lambda V \ox S \shift{W},
-\delta_{gf} + \delta_V)
\to H^\bl(\Lambda U \ox S \shift{W}, -\delta_{gf})$
is an isomorphism with inverse
$i : H^\bl(\Lambda U \ox S \shift{W}, -\delta_{gf})
\to H^\bl(\Lambda U \ox S \shift{V} \ox \Lambda V \ox S \shift{W},
-\delta_{gf} + \delta_V)$.
This completes the proof of
Proposition~\ref{rank:prop:rel_Sullivan}. \qed

\section{Preliminaries on the relative Lie algebra cohomology of
reductive pairs}\label{rank:sect:Cartan}

In this section, we recall the Cartan--Chevalley--Koszul--Weil
theory (announced in \cite{Car51b})
on transgressions for a reductive Lie algebra and
the Sullivan model for a reductive pair. We mostly omit the proofs.
See \cite{GHV76} or \cite{Oni94} for details on this subject.

We retain the notations of Section~\ref{rank:sect:Sullivan}.
We always regard the dual $\g^\ast$ of a Lie algebra $\g$
as a graded vector space concentrated in degree 1.
Thus $\shift{\g^\ast}$ is concentrated in degree 2.
We write $\LL$ for the $\g$-action on the exterior algebra
$\Lambda \g^\ast$.
Given an automorphism $\theta$ of a Lie algebra $\g$,
we denote by the same symbol $\theta$ the induced automorphisms of
$(\Lambda \g^\ast)^\g$, $(S \shift{\g^\ast})^\g$, etc.
Note that our notations are not the same as any of
\cite{Car51b}, \cite{GHV76} and \cite{Oni94}; for instance,
$\Lambda P_{\g^\ast} \ox (S \shift{\h^\ast})^\h$
in our notation corresponds to
$\mathrm{I_A(G)} \ox \mathrm{I_S(\mathit{g})}$ in \cite{Car51b}, to
$(\vee \mathbb{F}^\ast)_{\theta = 0} \ox \wedge P_E$ in \cite{GHV76}
and to $C(\g, \h) = \wedge P_G \ox S_H$ in \cite{Oni94}.

\subsection{Relative Lie algebra cohomology}\label{rank:subsect:def}

Let $\g$ be a Lie algebra and $\h$ a subalgebra of $\g$.
Let $d$ be the differential on the exterior algebra
$\Lambda \g^\ast$ given by
\begin{align*}
(d\alpha)(X_1, \dots, X_{p+1}) =
\sum_{1 \leq i<j \leq p+1} \alpha([X_i, X_j], X_1, \dots,
\widehat{X_i}, \dots, \widehat{X_j}, \dots, X_{p+1})& \\
\qquad\qquad
(\alpha \in \Lambda^p \g^\ast,\ X_1, \dots, X_{p+1} \in \g)&.
\end{align*}
The graded subalgebra
\[
(\Lambda (\g/\h)^\ast)^\h = \{ \alpha \in \Lambda \g^\ast :
\text{$\iota(X)\alpha = 0$, $\LL(X)\alpha = 0$
for all $X \in \h$} \}
\]
of $\Lambda \g^\ast$ is closed under the differential $d$.
The cohomology of the differential graded algebra
$((\Lambda (\g/\h)^\ast)^\h, d)$ is denoted by $H^\bl(\g, \h; \K)$
and called the relative Lie algebra cohomology of a pair $(\g, \h)$.

\begin{rem}
If $\K = \R$ and a pair $(\g, \h)$
comes from a homogeneous space $G/H$,
it is easy to see that the above definition coincides with
the geometric definition given in Introduction.
\end{rem}

\subsection{The Cartan model of equivariant cohomology and
the Chern--Weil homomorphism}\label{rank:subsect:Cartan_model}
H. Cartan and A. Weil defined the notion of equivariant cohomology
for a differential graded algebra equipped with
``interior products'' and ``Lie derivatives''
by the elements of a Lie algebra (\cite{Car51a}, \cite{Car51b}).
We here explain their basic results in the case of
$(\Lambda \g^\ast, d)$, which admits interior products and
Lie derivatives by the elements of $\h$.
See e.g.\ \cite[Ch.~VIII]{GHV76} or \cite[\S\S 2--5]{Gui-Ste99}
for the general case.

Let $\g$ be a Lie algebra and $\h$ a subalgebra of $\g$.
Define a differential $d_{\g, \h}$ on a graded algebra
$(\Lambda \g^\ast \ox S \shift{\h^\ast})^\h$ by the formula
\[
d_{\g,\h} = d \ox 1- \sum_j \iota(F_j) \ox \mu(\shift{F^j}),
\]
where $(F_j)_j$ is a basis of $\h$ and
$(F^j)_j$ the basis of $\h^\ast$ dual to $(F_j)_j$.
The cohomology of the differential graded algebra
$((\Lambda \g^\ast \ox S \shift{\h^\ast})^\h, d_{\g,\h})$
is called the Cartan model of $\h$-equivariant cohomology of
$\Lambda \g^\ast$. The natural inclusion
\[
w : ((S \shift{\h^\ast})^\h, 0)
\to ((\Lambda \g^\ast \ox S \shift{\h^\ast})^\h, d_{\g,\h}),
\qquad \shift{Q} \mapsto 1 \ox \shift{Q}
\]
induces a homomorphism $w : (S \shift{\h^\ast})^\h
\to H^\bl((\Lambda \g^\ast \ox S \shift{\h^\ast})^\h, d_{\g,\h})$,
which is said to be the Chern--Weil homomorphism.

One has a natural inclusion of differential graded algebras
\[
\epsilon : ((\Lambda (\g/\h)^\ast)^\h, d)
\to ((\Lambda \g^\ast \ox S \shift{\h^\ast})^\h, d_{\g,\h}),
\qquad \alpha \mapsto \alpha \ox 1.
\]
\begin{fact}[cf.\ {\cite[Ch.~VIII, Th.~IV]{GHV76},
\cite[\S 5.1]{Gui-Ste99}}]\label{rank:fact:Lie_vs_Cartan}
When $\h$ has an $\h$-invariant complementary linear subspace $V$
in $\g$ (e.g.\ when $\h = \g$ or when $\h$ is reductive in $\g$),
the inclusion $\epsilon$ induces an isomorphism
$\epsilon : H^\bl(\g, \h; \K) \simto
H^\bl((\Lambda \g^\ast \ox S \shift{\h^\ast})^\h, d_{\g,\h})$.
\end{fact}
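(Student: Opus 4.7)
The plan is to produce an explicit homotopy equivalence by means of a change of variables in the spirit of Mathai--Quillen and Kalkman, reducing the Cartan complex to the relative Chevalley--Eilenberg complex via the acyclicity of the Weil algebra.

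First, I would fix an $\h$-invariant complement $V \subset \g$ of $\h$, with $\h$-equivariant projections $p \colon \g \to \h$ and $q \colon \g \to V$. The dual injections give a decomposition $\g^\ast = p^\ast \h^\ast \oplus q^\ast V^\ast$ as $\h$-modules, so I can identify $V^\ast$ with $(\g/\h)^\ast$ and obtain a natural $\h$-equivariant isomorphism of graded algebras $\Lambda \g^\ast \cong \Lambda V^\ast \ox \Lambda \h^\ast$. After passing to $\h$-invariants, the Cartan model becomes
\[
(\Lambda \g^\ast \ox S\shift{\h^\ast})^\h \cong \bigl(\Lambda V^\ast \ox W(\h)\bigr)^\h,
\]
where $W(\h) = \Lambda \h^\ast \ox S\shift{\h^\ast}$ is the Weil algebra of $\h$.

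Second, I would introduce an $\h$-equivariant algebra automorphism $\Phi$ of $\Lambda \g^\ast \ox S\shift{\h^\ast}$ determined by sending each basis element of $p^\ast \h^\ast$ to itself plus the corresponding generator of $\shift{\h^\ast}$, and verify that conjugation by $\Phi$ transforms $d_{\g,\h}$ into
\[
\Phi^{-1} d_{\g,\h} \Phi = d_V \ox 1 + 1 \ox d_W + R,
\]
where $d_V$ is the Chevalley--Eilenberg differential on $\Lambda V^\ast \cong \Lambda(\g/\h)^\ast$, $d_W$ is the standard Weil differential making $W(\h)$ acyclic, and $R$ strictly raises polynomial degree in $S\shift{\h^\ast}$. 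I would then filter by polynomial degree in $S\shift{\h^\ast}$ and examine the resulting spectral sequence: the $E_0$-differential is $1 \ox d_W$, and since $(W(\h), d_W)$ is acyclic with cohomology $\K$ concentrated on the constants (and the contracting homotopy can be chosen $\h$-equivariant), the $E_1$-page collapses onto $((\Lambda V^\ast)^\h, d_V) = ((\Lambda(\g/\h)^\ast)^\h, d)$. Only the column $E_1^{0,\bl}$ survives, so the spectral sequence degenerates and yields the desired cohomology isomorphism. One sees directly that $\epsilon(\alpha) = \alpha \ox 1$ sits in the lowest piece of the filtration and is unaffected by $\Phi$, hence realizes this isomorphism.

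The principal obstacle is the verification of the conjugation formula in the second step. This is a bookkeeping calculation that demands decomposing the Lie bracket on $\g$ according to $\g = \h \oplus V$: the $\h$-valued component of $[V, V]$ is what produces the error term $R$; the $V$-valued component of $[V,V]$, together with the $\h$-action on $V$, assembles into $d_V$; and the internal bracket on $\h$ combines with the correction term $\sum_j \iota(F_j) \ox \mu(\shift{F^j})$ to form $d_W$. Getting all signs consistent under the Koszul convention and confirming that $\Phi$ is actually an algebra automorphism intertwining the relevant structures is where the bulk of the technical effort lies.
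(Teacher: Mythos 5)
The paper does not actually prove this statement: it is quoted as a known Fact from \cite[Ch.~VIII, Th.~IV]{GHV76} and \cite[\S 5.1]{Gui-Ste99}, and the text only records the explicit inverse quasi-isomorphism $\psi_V$, which substitutes the curvature $\chi(\shift{F}) = -F([\cdot,\cdot])|_{\Lambda^2 V}$ of the connection determined by $V$ into the polynomial variables. Your strategy --- identify $\Lambda\g^\ast \ox S\shift{\h^\ast}$ with $\Lambda V^\ast \ox W(\h)$, normalize the differential by a Mathai--Quillen/Kalkman change of variables, and kill the Weil factor by its ($\h$-equivariant) acyclicity --- is exactly the route of the cited sources, and $\psi_V$ is the map such an argument produces. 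So the plan is sound in outline.

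Two of your steps fail as written, though. First, the automorphism $\Phi$ you describe sends a basis element of $p^\ast\h^\ast$ (degree $1$) to itself plus the corresponding generator of $\shift{\h^\ast}$ (degree $2$); this does not preserve degree, so it is not an automorphism of the graded algebra. The correct substitution goes the other way around: one replaces each curvature variable $\shift{F^j}$ by $\shift{F^j} + \Omega^j$ with $\Omega^j \in \Lambda^2\g^\ast$ the curvature $2$-form of the connection (equivalently, one conjugates by the Kalkman operator $\exp(\sum_j \ep(\theta^j)\ox\iota(F_j))$ built from the connection forms $\theta^j \in p^\ast\h^\ast$). Second, and more seriously, the filtration by polynomial degree in $S\shift{\h^\ast}$ cannot have $1\ox d_W$ as its $E_0$-differential: the Weil differential itself raises polynomial degree (its Koszul component sends $F^j \in \Lambda^1\h^\ast$ to $\shift{F^j}$), while $d_V\ox 1$ preserves polynomial degree and so would sit in $E_0$. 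With your filtration the $E_0$-differential omits precisely the Koszul component responsible for acyclicity, and the argument collapses. The repair is to filter by exterior degree in $\Lambda V^\ast$ instead: then $d_V$ and the curvature term $F \mapsto F([\cdot,\cdot])|_{\Lambda^2 V}$ strictly raise the filtration degree (so the twist $\Phi$ is in fact unnecessary for the quasi-isomorphism statement), the $E_0$-complex is the $\h$-coupled Weil algebra of $\h$ with coefficients in $\Lambda^p V^\ast$ restricted to $\h$-invariants, and the standard $\h$-equivariant contracting homotopy (of the same type as the operator $\kappa$ with $\delta_V\kappa + \kappa\delta_V = 1 - \pi_{0,0}$ used in Section~\ref{rank:subsect:proof_rel_Sullivan}) contracts it onto $(\Lambda^p V^\ast)^\h$ concentrated in fibre degree $0$. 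Note also that after taking $\h$-invariants the complex no longer factors as a tensor product, so acyclicity of $W(\h)$ must indeed be invoked via such an equivariant homotopy rather than via a K\"unneth-type identification.
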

The inverse isomorphism is constructed as follows
(cf.\ \cite[Ch.~VIII, Prop.~IX]{GHV76}, \cite[\S 5.2]{Gui-Ste99}).
Let $\pi_V$ denote the projection
$\Lambda \g^\ast = \Lambda \h^\ast \ox \Lambda V^\ast
\surjto \Lambda V^\ast$.
Let $\chi : S \shift{\h^\ast} \to \Lambda V$
be the graded algebra homomorphism induced from
the graded linear map
\[
\shift{\h^\ast} \to \Lambda^2 V^\ast,
\qquad \shift{F} \mapsto -F([ \cdot, \cdot ]),
\]
where $F \in \h^\ast$ is regarded as an element of $\g^\ast$
by putting $F|_V = 0$. Then, the graded algebra homomorphism
\[
\psi_V : \Lambda \g^\ast \ox S \shift{\h^\ast}
\to \Lambda V^\ast \ (\simeq \Lambda (\g/\h)^\ast), \qquad
\alpha \ox \shift{Q} \mapsto \pi_V(\alpha) \w \chi(\shift{Q}).
\]
restricts to the differential graded algebra homomorphism
\[
\psi_V : ((\Lambda \g^\ast \ox S \shift{\h^\ast})^\h, d_{\g,\h})
\to ((\Lambda (\g/\h)^\ast)^\h, d).
\]
This $\psi_V$ induces the inverse of $\epsilon$ in cohomology.
We simply write $w$ for the composition
\[
(\epsilon^{-1} \circ w =)\, \psi_V \circ w : (S \shift{\h^\ast})^\h
\to H^\bl((\Lambda \g^\ast \ox S \shift{\h^\ast})^\h, d_{\g,\h})
\simto H^\bl(\g, \h; \K),
\]
which is also said to be the Chern--Weil homomorphism.

\begin{rem}[{cf.\ \cite[Ch.~XI, \S 1]{GHV76}}]\label{rank:rem:cptconnCW}
Recall that we can identify
$H^\bl(\g, \h; \R)$ with $H^\bl(G/H; \R)$
if $G$ is a connected compact Lie group with Lie algebra $\g$ and
$H$ is a connected closed subgroup with Lie algebra $\h$
(Remark~\ref{rank:rem:cptconn}).
Under this identification, the Chern--Weil homomorphism
$w : (S \shift{\h^\ast})^\h \to H^\bl(\g, \h; \R)$
defined here corresponds to the Chern--Weil homomorphism
$w : (S \shift{\h^\ast})^\h \to H^\bl(G/H; \R)$
for the principal $H$-bundle $G \to G/H$.
\end{rem}

\subsection{The Cartan map}

Let $\g$ be a Lie algebra.
By Fact~\ref{rank:fact:Lie_vs_Cartan}, one has
\[
H^p((\Lambda \g^\ast \ox S \shift{\g^\ast})^\g, d_{\g,\g})
\simeq H^p(\g, \g; \K)
= \begin{cases} \K &(p = 0), \\ 0 &(p \geq 1). \end{cases}
\]
Thus, for $\shift{P} \in ((S \shift{\g^\ast})^\g)^{2k} \
(= (S^k \shift{\g^\ast})^\g) \ (k \geq 1)$,
there exists a unique element
$\rho_\g(\shift{P})$ of $(\Lambda^{2k-1} \g^\ast)^\g$ such that
$d_{\g,\g}(\rho_\g(\shift{P}) \ox 1 + \Omega) = -1 \ox \shift{P}$
for some $\Omega \in (\Lambda \g^\ast \ox S^+\shift{\g^\ast})^\g$
(the uniqueness follows from $d|_{(\Lambda \g^\ast)^\g} = 0$).
This defines a linear map
$\rho_\g : (S^+ \shift{\g^\ast})^\g \to (\Lambda^+ \g^\ast)^\g$
of degree $-1$, called the Cartan map for $\g$.
See \cite[Ch.~VI, \S 2]{GHV76} for details.

\subsection{Primitive elements and transgressions}
\label{rank:subsect:trans}

Let $\g$ be a reductive Lie algebra.
Let $P_{\g^\ast}$ denote the space of primitive elements in
$(\Lambda \g^\ast)^\g$, namely,
\[
P_{\g^\ast} = \{ \alpha \in (\Lambda^+ \g^\ast)^\g :
\text{$\alpha(x \w y) = 0$ for all
$x, y \in (\Lambda^+ \g)^\g$} \}.
\]
It is known that $P_{\g^\ast}$ is oddly graded
(\cite[Ch.~V, Lem.~VII (1)]{GHV76}),
the inclusion $P_{\g^\ast} \injto (\Lambda \g^\ast)^\g$
induces an isomorphism
$\Lambda P_{\g^\ast} \simeq (\Lambda \g^\ast)^\g$
(\cite[Ch.~V, Th.~III]{GHV76})
and the dimension of $P_{\g^\ast}$ is equal to the rank of $\g$
(\cite[Ch.~X, Th.~XII]{GHV76}).

\begin{rem}
If $\g$ be a reductive Lie algebra, $(\Lambda \g^\ast)^\g$
is dual to the graded algebra $(\Lambda \g)^\g$
and therefore admits a graded coalgebra structure in a natural way.
One can easily see that $(\Lambda \g)^\g$ together with
the usual algebra structure and the above coalgebra structure
forms a graded Hopf algebra.
The above definition of $P_{\g^\ast}$ coincides with
the usual definition of the space of primitive elements in
a graded Hopf algebra.
\end{rem}

\begin{fact}[{\cite[Ch.~VI, Th.~II]{GHV76}}]
\label{rank:fact:isom_Cartan}
The Cartan map $\rho_\g$ for a reductive Lie algebra $\g$ satisfies
$\ker \rho_\g =
(S^+ \shift{\g^\ast})^\g \cdot (S^+ \shift{\g^\ast})^\g$ and
$\im \rho_\g = P_{\g^\ast}$.
\end{fact}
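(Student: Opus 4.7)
The plan is to combine the acyclicity of the Cartan model in positive degrees, which is Fact~\ref{rank:fact:Lie_vs_Cartan} applied with $\h = \g$, with the classical structure of the invariants for reductive $\g$: one has $(\Lambda \g^\ast)^\g \cong \Lambda P_{\g^\ast}$ with $\dim P_{\g^\ast} = \rank \g =: r$, and by Chevalley $(S \shift{\g^\ast})^\g$ is a polynomial algebra on $r$ homogeneous generators $\shift{P}_1,\dots,\shift{P}_r$ whose degrees $2d_1,\dots,2d_r$ match those of the exterior generators of $(\Lambda \g^\ast)^\g$ up to the shift by $-1$. Within the acyclic complex $((\Lambda \g^\ast \ox S \shift{\g^\ast})^\g, d_{\g,\g})$, every cocycle of the form $-1 \ox \shift{P}$ with $\shift{P} \in (S^+ \shift{\g^\ast})^\g$ is exact, and $\rho_\g(\shift{P})$ is by construction the unique $S^0$-component of a primitive.

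For the inclusion $(S^+ \shift{\g^\ast})^\g \cdot (S^+ \shift{\g^\ast})^\g \subseteq \ker \rho_\g$, I would use a product argument: given $\shift{P}_1, \shift{P}_2 \in (S^+ \shift{\g^\ast})^\g$ and primitives $\omega_i = \rho_\g(\shift{P}_i) \ox 1 + \Omega_i$ with $d_{\g,\g} \omega_i = -1 \ox \shift{P}_i$, the element $1 \ox \shift{P}_2$ is $d_{\g,\g}$-closed (both $d \ox 1$ and the Koszul correction annihilate constants in the first factor), so $\omega_1 \cdot (1 \ox \shift{P}_2)$ is a primitive of $-1 \ox \shift{P}_1 \shift{P}_2$ whose $S^0$-component vanishes because multiplication by $1 \ox \shift{P}_2$ strictly raises $S$-degree. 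Uniqueness of $\rho_\g(\shift{P}_1 \shift{P}_2)$ then forces it to be zero.

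For the inclusion $\im \rho_\g \subseteq P_{\g^\ast}$, I would exploit the coalgebra structure on $(\Lambda \g^\ast)^\g$ dual to the commutative algebra $(\Lambda \g)^\g$, under which $P_{\g^\ast}$ is exactly the primitive subspace. Naturality of the Cartan map along the diagonal $\g \injto \g \oplus \g$ produces a relation of the form $\rho_{\g \oplus \g}(\shift{P} \ox 1 + 1 \ox \shift{P}) = \rho_\g(\shift{P}) \ox 1 + 1 \ox \rho_\g(\shift{P})$ (cross-terms vanish by the previous step), and under the identification $(\Lambda(\g \oplus \g)^\ast)^{\g \oplus \g} \cong (\Lambda \g^\ast)^\g \ox (\Lambda \g^\ast)^\g$ this is precisely the assertion that $\rho_\g(\shift{P})$ is primitive. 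Both inclusions are then equalities by a graded dimension count: $(S^+ \shift{\g^\ast})^\g / (\text{decomposables})$ and $P_{\g^\ast}$ have matching graded dimensions (both $r$, with a degree shift of $1$), and the spectral sequence of the $S$-degree filtration on $((\Lambda \g^\ast \ox S \shift{\g^\ast})^\g, d_{\g,\g})$, whose $E_\infty$-page collapses to $\K$ in total degree $0$, forces the induced map between these graded generators to be bijective.

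The main obstacle is the primitivity assertion in the third paragraph: one must carefully implement naturality of the Cartan map under the diagonal embedding of Lie algebras, or equivalently track how differentials in the $S$-degree spectral sequence pair up odd-degree generators of $(\Lambda \g^\ast)^\g$ with even-degree generators of $(S \shift{\g^\ast})^\g$ so as to collapse the $E_\infty$-page.
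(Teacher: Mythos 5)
First, a point of comparison: the paper does not prove this statement at all --- it is Fact~\ref{rank:fact:isom_Cartan} precisely because it is quoted from \cite{GHV76}, Ch.~VI, Th.~II, so there is no in-paper argument to measure yours against, and your proposal has to stand on its own. Your second paragraph does: $1 \ox \shift{P}_2$ is $d_{\g,\g}$-closed, so $\omega_1 \cdot (1 \ox \shift{P}_2)$ is an invariant cochain with differential $-1 \ox \shift{P}_1\shift{P}_2$ and vanishing $S^0$-component, and the uniqueness clause in the definition of $\rho_\g$ gives $\rho_\g(\shift{P}_1\shift{P}_2) = 0$. That is the standard argument and it is correct.

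The genuine gap is the primitivity step, which you yourself flag as the main obstacle. The diagonal $\Delta : \g \injto \g \oplus \g$ is a Lie algebra homomorphism along which the Cartan map is natural, but the map it induces on invariants, $(\Lambda(\g\oplus\g)^\ast)^{\g\oplus\g} \cong (\Lambda \g^\ast)^\g \ox (\Lambda \g^\ast)^\g \to (\Lambda \g^\ast)^\g$, is the \emph{product} $\alpha \ox \beta \mapsto \alpha \w \beta$, not the coproduct. Your identity $\rho_{\g\oplus\g}(\shift{P} \ox 1 + 1 \ox \shift{P}) = \rho_\g(\shift{P}) \ox 1 + 1 \ox \rho_\g(\shift{P})$ is true, but it already follows from compatibility of $\rho$ with the two projections $\g \oplus \g \to \g$ and carries no information about the diagonal; applying $\Delta^\ast$ to both sides yields only $2\rho_\g(\shift{P}) = 2\rho_\g(\shift{P})$. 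The coproduct on $(\Lambda\g^\ast)^\g$ dual to the wedge product on $(\Lambda\g)^\g$ is induced by the sum map $\g \oplus \g \to \g$, which is not a Lie algebra homomorphism for nonabelian $\g$, so no naturality of the Cartan map is available in the direction you need, and nothing in your argument identifies $\rho_{\g\oplus\g}(\shift{P}\ox 1 + 1\ox\shift{P})$ with the coproduct of $\rho_\g(\shift{P})$. This is exactly the hard content of Cartan's theorem; in \cite{GHV76} it is handled by a filtration and induction argument resting on $(\Lambda\g^\ast)^\g \cong \Lambda P_{\g^\ast}$, and topologically it is Borel's theorem that the transgression in the universal bundle takes values in primitives. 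A secondary worry: the ``graded dimension count'' presupposes that the degrees of the polynomial generators of $(S\shift{\g^\ast})^\g$ match those of the exterior generators of $(\Lambda\g^\ast)^\g$ after the degree shift; that matching is essentially equivalent to the theorem being proved, so quoting it as known input makes the surjectivity half circular unless you make precise your closing remark that acyclicity of $((\Lambda\g^\ast \ox S\shift{\g^\ast})^\g, d_{\g,\g})$ forces it.
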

A linear map $\tau_\g : P_{\g^\ast} \to (S^+ \shift{\g^\ast})^\g$
of degree 1 satisfying $\rho_\g \circ \tau_\g = 1$
is called a transgression in the Weil algebra of $\g$.
We simply call it a transgression for $\g$.

\begin{fact}[{\cite[Ch.~VI, Th.~I]{GHV76}}]
\label{rank:fact:isom_transgression}
A transgression $\tau_\g$ for a reductive Lie algebra $\g$
induces a graded algebra isomorphism
$\shift{\tau_\g} : S \shift{P_{\g^\ast}} \simto
(S \shift{\g^\ast})^\ast$.
\end{fact}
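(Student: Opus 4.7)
The plan is to combine Fact~\ref{rank:fact:isom_Cartan} with Chevalley's restriction theorem: the former identifies $P_{\g^\ast}$ with the space of indecomposables of $(S^+\shift{\g^\ast})^\g$ via $\rho_\g$, while the latter asserts that $(S\shift{\g^\ast})^\g$ is a polynomial algebra on $\rank\g = \dim P_{\g^\ast}$ homogeneous generators. Together they force any section of $\rho_\g$ to carry $P_{\g^\ast}$ onto a free system of polynomial generators of $(S\shift{\g^\ast})^\g$, which is precisely what the statement asserts.

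First I would check that $\shift{\tau_\g}$ is well-defined as a graded algebra homomorphism. Since $P_{\g^\ast}$ is oddly graded, $\shift{P_{\g^\ast}}$ is concentrated in even degrees, so $S\shift{P_{\g^\ast}}$ is an ordinary polynomial algebra on $\shift{P_{\g^\ast}}$; we put $\shift{\tau_\g}(\shift v) = \tau_\g(v) \in (S^+\shift{\g^\ast})^\g$ for $v \in P_{\g^\ast}$ and extend multiplicatively. Next, I would introduce the projection onto the indecomposables
\[
q : (S^+\shift{\g^\ast})^\g \surjto Q := (S^+\shift{\g^\ast})^\g \big/ \bigl((S^+\shift{\g^\ast})^\g\bigr)^2.
\]
By Fact~\ref{rank:fact:isom_Cartan}, $\rho_\g$ vanishes on decomposables and has image $P_{\g^\ast}$, so it induces a (degree $-1$) isomorphism $\bar\rho_\g : Q \simto P_{\g^\ast}$. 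Using $\rho_\g \circ \tau_\g = 1_{P_{\g^\ast}}$ we obtain $q \circ \tau_\g = \bar\rho_\g^{-1}$; hence, for any homogeneous basis $v_1,\dots,v_r$ of $P_{\g^\ast}$, the elements $\tau_\g(v_1),\dots,\tau_\g(v_r) \in (S^+\shift{\g^\ast})^\g$ lift a basis of $Q$.

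To conclude, I would invoke Chevalley's restriction theorem to assert that $(S\shift{\g^\ast})^\g$ is a polynomial algebra on $r = \rank\g$ homogeneous generators. A standard graded Nakayama argument for the positively graded connected commutative algebra $(S\shift{\g^\ast})^\g$ then shows that any homogeneous elements lifting a basis of the indecomposables generate the whole algebra; since $\dim P_{\g^\ast} = r$, a comparison of Hilbert series forces $\tau_\g(v_1),\dots,\tau_\g(v_r)$ to be algebraically independent. Hence $\shift{\tau_\g}$ sends a polynomial basis of $S\shift{P_{\g^\ast}}$ bijectively to one of $(S\shift{\g^\ast})^\g$, so is a graded algebra isomorphism. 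The main obstacle here is really external: it is the input of Chevalley's theorem identifying $(S\shift{\g^\ast})^\g$ as a polynomial algebra on $\rank\g$ generators, which is a nontrivial structural result for reductive Lie algebras. Granting it, the remainder is a short exercise in graded commutative algebra combined with the precise description of $\ker\rho_\g$ and $\im\rho_\g$ supplied by Fact~\ref{rank:fact:isom_Cartan}.
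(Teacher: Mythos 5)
Your argument is correct, but note that the paper does not prove this statement at all: it is quoted as a Fact from \cite[Ch.~VI, Th.~I]{GHV76} (with a typo in the target, which should read $(S\shift{\g^\ast})^\g$ rather than $(S\shift{\g^\ast})^\ast$, as you correctly assumed). So the comparison is really with the source. There the statement is part of the Hopf--Koszul--Samelson transgression theorem: the polynomiality of $(S\shift{\g^\ast})^\g$ is not assumed but \emph{derived}, from the acyclicity of the Weil algebra, the Samelson isomorphism $(\Lambda\g^\ast)^\g \simeq \Lambda P_{\g^\ast}$, and a Poincar\'e series comparison; in particular Theorems~I and~II of that chapter are proved together, so within \cite{GHV76} one cannot simply cite Theorem~II (your Fact~\ref{rank:fact:isom_Cartan}) to get Theorem~I. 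Your route short-circuits this by importing the polynomiality of the invariant algebra from Lie theory, after which the rest is, as you say, elementary graded commutative algebra: $\tau_\g$ being a section of $\rho_\g$ makes $\tau_\g(v_1),\dots,\tau_\g(v_r)$ a lift of a basis of the indecomposables, graded Nakayama makes them generators, and minimal homogeneous generating sets of a graded polynomial algebra are free (your Hilbert series comparison). Two small points of precision: what you call ``Chevalley's restriction theorem'' only gives $(S\g^\ast)^\g \simeq (S\mathfrak{t}^\ast)^W$; the polynomiality on $\rank\g$ generators additionally needs Chevalley's theorem on invariants of finite reflection groups, plus the (trivial) treatment of the central factor of a reductive $\g$. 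Also, once you know the invariant algebra is polynomial, you do not separately need $\dim P_{\g^\ast}=\rank\g$: the number of free generators automatically equals $\dim Q = \dim P_{\g^\ast}$ by your indecomposables computation. Each approach buys something: yours is shorter granted the Lie-theoretic input, while the homological proof is self-contained in the Weil-algebra formalism and yields the polynomiality statement as output rather than requiring it as input.
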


The condition $\rho_\g \circ \tau_\g = 1$
is equivalent to the existence of a graded linear map
$\Omega : P_{\g^\ast} \to
(\Lambda \g^\ast \ox S^+\shift{\g^\ast})^\g$
such that
$d_{\g, \g}(\alpha \ox 1 + \Omega(\alpha))
= -1 \ox \tau_\g(\alpha) \ (\alpha \in P_{\g^\ast})$.
There exists a unique transgression $\tau_\g$ for $\g$
such that this graded linear map $\Omega$ can be taken so that
$(\iota(Z) \ox 1)(\Omega(\alpha)) = 0$ for any
$Z \in (\Lambda^+ \g)^\g$ and $\alpha \in P_{\g^\ast}$
(\cite[Ch.~VI, Prop.~VI]{GHV76}).
It is called the distinguished transgression for $\g$.

\subsection{Compatibility with automorphisms}

It is obvious from the definition of the Cartan map $\rho_\g$
for a Lie algebra $\g$ that the following diagram commutes for
any automorphism $\theta$ of $\g$:
\[
\xymatrix{
(S^+ \shift{\g^\ast})^\g \ar[r]_{\ \rho_\g} \ar[d]_{\theta} &
\Lambda^+ \g^\ast \ar[d]_{\theta}\ \\
(S^+ \shift{\g^\ast})^\g \ar[r]_{\ \rho_\g} &
\Lambda^+ \g^\ast.
}
\]
We say that a transgression $\tau_\g$ for a reductive Lie algebra
$\g$ is compatible with an automorphism $\theta$ of $\g$ if
the following diagram commutes:
\[
\xymatrix{
P_{\g^\ast} \ar[r]_{\hspace{-0.5cm} \tau_\g} \ar[d]_{\theta} &
(S^+ \shift{\g^\ast})^\g \ar[d]_{\theta}\ \\
P_{\g^\ast} \ar[r]_{\hspace{-0.5cm} \tau_\g} &
(S^+ \shift{\g^\ast})^\g.
}
\]
It readily follows from its uniqueness that
the distinguished transgression is compatible with any automorphism.

\subsection{The Sullivan model for a reductive pair}
\label{rank:subsect:standard_Sullivan}

Now, let $(\g, \h)$ be a reductive pair,
i.e.\ $\g$ a reductive Lie algebra and
$\h$ a subalgebra of $\g$ such that $\h$ is reductive in $\g$.
Let $\tau_\g : P_{\g^\ast} \to (S^+ \shift{\g^\ast})^\g$
be a transgression for $\g$ and $\shift{\tau_\g} :
S \shift{P_{\g^\ast}} \simto (S \shift{\g^\ast})^\g$
the induced isomorphism
(cf.\ Fact~\ref{rank:fact:isom_transgression}).
Define a graded algebra homomorphism
$\shift{\tau_{\g, \h}} :
S \shift{P_{\g^\ast}} \to (S \shift{\h^\ast})^\h$
by $\shift{\tau_{\g, \h}}(\shift{\Omega})
= \shift{\tau_\g}(\shift{\Omega})|_\h$.
Here, $( \cdot )|_\h :
(S \shift{\g^\ast})^\g \to (S \shift{\h^\ast})^\h$
denotes the restriction map.
We sometimes write $\rest$ instead of $( \cdot )|_\h$.
Let us consider the pure Sullivan algebra
$(\Lambda P_{\g^\ast} \ox (S\h^\ast)^\h,
-\delta_{\shift{\tau_{\g, \h}}})$
associated with $\shift{\tau_{\g,\h}}$:
\[
\delta_{\shift{\tau_{\g, \h}}} (\alpha \otimes 1)
= 1 \ox \tau_{\g}(\alpha)|_\h, \quad
\delta_{\shift{\tau_{\g, \h}}} (1 \ox \shift{Q})
= 0 \qquad (\alpha \in P_{\g^\ast},\ Q \in (S \h^\ast)^\h).
\]
By definition of $\tau_\g$, there exists a graded linear map
$\Omega : P_{\g^\ast} \to
(\Lambda \g^\ast \ox S^+\shift{\g^\ast})^\g$
such that
$d_{\g, \g}(\alpha \ox 1 + \Omega(\alpha)) = -1 \ox \tau_\g(\alpha)
\ (\alpha \in P_{\g^\ast})$.
Let us take one of such $\Omega$. The Chevalley homomorphism
\[
\vartheta_\Omega : (\Lambda P_{\g^\ast} \ox (S \shift{\h^\ast})^\h,
-\delta_{\shift{\tau_{\g,\h}}})
\to ((\Lambda \g^\ast \ox S \shift{\h^\ast})^\h, d_{\g,\h})
\]
is a differential graded algebra homomorphism defined by
\begin{align*}
\vartheta_\Omega(\alpha \ox 1)
&= \alpha \ox 1 + (1 \ox \rest)(\Omega(\alpha))
&(\alpha \in P_{\g^\ast}), \\
\vartheta_\Omega(1 \ox \shift{Q})
&= 1 \ox \shift{Q}
&(Q \in (S \h^\ast)^\h),
\end{align*}
where $\rest : S \shift{\g^\ast} \to S \shift{\h^\ast}$
is the restriction map.

\begin{fact}
[{\cite[Ch.~X, Prop.~IV]{GHV76}}]\label{rank:fact:Chevalley}
The Chevalley homomorphism $\vartheta_\Omega$
induces an isomorphism in cohomology:
\[
\vartheta_\Omega :
H^\bl(\Lambda P_{\g^\ast} \ox (S \shift{\h^\ast})^\h,
-\delta_{\shift{\tau_{\g,\h}}})
\simto H^\bl((\Lambda \g^\ast \ox S \shift{\h^\ast})^\h, d_{\g,\h})
\ (\simeq H^\bl(\g, \h; \K)).
\]
\end{fact}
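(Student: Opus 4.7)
My plan is to verify first that $\vartheta_\Omega$ is a chain map, and then to obtain the isomorphism in cohomology by comparing the spectral sequences arising from the filtration by $S\shift{\h^\ast}$-degree on both sides.

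For the chain map verification, since both $d_{\g,\h}\vartheta_\Omega$ and $-\vartheta_\Omega\delta_{\shift{\tau_{\g,\h}}}$ are derivations, it suffices to check them on generators. The case $1\ox\shift{Q}$ is immediate. For $\alpha\ox 1$ with $\alpha\in P_{\g^\ast}$, the argument combines three inputs: (a) the defining property $d_{\g,\g}(\alpha\ox 1+\Omega(\alpha))=-1\ox\tau_\g(\alpha)$; (b) the fact that the partial restriction $1\ox\rest:((\Lambda\g^\ast\ox S\shift{\g^\ast})^\g,d_{\g,\g})\to((\Lambda\g^\ast\ox S\shift{\h^\ast})^\h,d_{\g,\h})$ is a chain map, which is a direct computation once a basis of $\g$ extending a basis of $\h$ is fixed, using that the dual basis vectors of an $\h$-invariant complement of $\h$ in $\g$ vanish after restriction; and (c) $d\alpha=0$ for $\alpha\in(\Lambda\g^\ast)^\g$, which is standard for reductive $\g$. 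Applying $1\ox\rest$ to (a) and combining with (b) and (c) yields the desired identity.

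For the isomorphism in cohomology, I filter both differential graded algebras by $S\shift{\h^\ast}$-degree, setting $F^p$ to be the subspace of elements of $S\shift{\h^\ast}$-degree $\geq p$. This filtration is bounded in each total degree, so the associated spectral sequences converge. On the left-hand side, $\delta_{\shift{\tau_{\g,\h}}}$ sends $\alpha\ox 1$ to $1\ox\tau_\g(\alpha)|_\h$, which is of strictly positive $S\shift{\h^\ast}$-degree, so $d_0=0$ and $E_1=\Lambda P_{\g^\ast}\ox(S\shift{\h^\ast})^\h$. On the right-hand side, writing $d_{\g,\h}=d\ox 1-\sum_j\iota(F_j)\ox\mu(\shift{F^j})$, the second summand strictly raises the $S\shift{\h^\ast}$-degree while $d\ox 1$ preserves it, so $d_0=d\ox 1$. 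Using that $\g$ is reductive (so $H^\bl(\Lambda\g^\ast,d)=\Lambda P_{\g^\ast}$) and that $\h$ is reductive (so $(\,\cdot\,)^\h$ commutes with cohomology), the right-hand side also has $E_1=\Lambda P_{\g^\ast}\ox(S\shift{\h^\ast})^\h$. Since $\vartheta_\Omega$ preserves the filtration and sends $\alpha\ox 1$ to $\alpha\ox 1+(1\ox\rest)\Omega(\alpha)$, whose second summand lies in strictly higher filtration, $\vartheta_\Omega$ induces the identity on the $E_1$ pages. The comparison theorem for spectral sequences of bounded filtration then delivers the desired cohomological isomorphism.

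I expect the main obstacle to be the chain map verification: although the conceptual reason is clean (the ``universal'' primitive $\Omega(\alpha)$ constructed over $\g$ restricts to one over $\h$), the bookkeeping with Koszul signs on $\shift{\g^\ast}$ and $\shift{\h^\ast}$, the precise forms of $d_{\g,\g}$ and $d_{\g,\h}$, and the compatibility of $\rest$ with these two Cartan differentials demand careful handling.
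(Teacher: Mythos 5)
The paper does not prove this statement; it explicitly defers to \cite[Ch.~X, Prop.~IV]{GHV76}, since Section~\ref{rank:sect:Cartan} ``mostly omit[s] the proofs.'' Your blind proposal therefore supplies an argument where the paper supplies none, and the argument is correct.

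To record why: the chain-map check is clean once one observes that $1 \ox \rest : ((\Lambda \g^\ast \ox S\shift{\g^\ast})^\g, d_{\g,\g}) \to ((\Lambda \g^\ast \ox S\shift{\h^\ast})^\h, d_{\g,\h})$ intertwines the two Cartan differentials. Writing $d_{\g,\g} = d\ox 1 - \sum_j \iota(F_j)\ox\mu(\shift{F^j}) - \sum_k \iota(V_k)\ox\mu(\shift{V^k})$ in a basis of $\g$ adapted to an $\h$-invariant splitting $\g=\h\oplus V$, the last sum is annihilated by $1\ox\rest$ because each $V^k$ vanishes on $\h$ (so $\rest(\shift{V^k})=0$), while the remaining terms coincide with $d_{\g,\h}\circ(1\ox\rest)$. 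Applying $1\ox\rest$ to $d_{\g,\g}(\alpha\ox 1+\Omega(\alpha))=-1\ox\tau_\g(\alpha)$ then yields $d_{\g,\h}(\vartheta_\Omega(\alpha\ox 1))=-1\ox\tau_\g(\alpha)|_\h = \vartheta_\Omega(-\delta_{\shift{\tau_{\g,\h}}}(\alpha\ox 1))$; in particular you do not really need item~(c) ($d\alpha=0$) here, it is already absorbed. For the spectral sequence step: the filtration by $S\shift{\h^\ast}$-degree is decreasing and bounded in each total degree (an element of total degree $n$ has $S\shift{\h^\ast}$-degree at most $n/2$), so both spectral sequences converge and the comparison theorem applies. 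On the target side $d_0=d\ox 1$, and since the $\h$-action on $\Lambda\g^\ast\ox S^p\shift{\h^\ast}$ is completely reducible (using that $\h$ is reductive in $\g$), taking $\h$-invariants commutes with $d_0$-cohomology; with $H^\bl(\Lambda\g^\ast,d)\simeq(\Lambda\g^\ast)^\g\simeq\Lambda P_{\g^\ast}$ and the fact that $(\Lambda\g^\ast)^\g$ carries the trivial $\h$-action, one gets $E_1\simeq\Lambda P_{\g^\ast}\ox(S\shift{\h^\ast})^\h$ exactly as you say, and $\vartheta_\Omega$ reduces modulo higher filtration to the inclusion $(\Lambda\g^\ast)^\g\ox(S\shift{\h^\ast})^\h\hookrightarrow(\Lambda\g^\ast\ox S\shift{\h^\ast})^\h$, hence induces an isomorphism on $E_1$. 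So the argument is complete; it is essentially the standard proof (and close in spirit to the Koszul--Cartan argument behind the GHV reference), and your anticipated ``main obstacle'' of sign bookkeeping in fact evaporates once $1\ox\rest$ is recognized as a chain map.
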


\begin{rem}
Fact~\ref{rank:fact:Chevalley} means that the Chevalley homomorphism
$\vartheta_\Omega$ (resp.\ $\psi_V \circ \vartheta_\Omega$,
where $\psi_V$ is as in Section~\ref{rank:subsect:Cartan_model})
is a Sullivan model for the differential graded algebra
$((\Lambda \g^\ast \ox S \shift{\h^\ast})^\h, d_{\g,\h})$
(resp.\ $((\Lambda (\g/\h)^\ast)^\h, d)$).
We thus call it the Sullivan model for the reductive pair
$(\g, \h)$, abusing terminology.
\end{rem}

\subsection{The Chern--Weil homomorphism in the Sullivan model}

We retain the setting of
Section~\ref{rank:subsect:standard_Sullivan}. Let
$w': (S \shift{\h^\ast})^\h \to
H^\bl(\Lambda P_{\g^\ast} \ox (S \shift{\h^\ast})^\h,
-\delta_{\shift{\tau_{\g,\h}}})$
be the homomorphism induced from the inclusion
\[
w' : ((S \shift{\h^\ast})^\h, 0) \to
(\Lambda P_{\g^\ast} \ox (S \shift{\h^\ast})^\h,
-\delta_{\shift{\tau_{\g,\h}}}),
\qquad \shift{Q} \mapsto 1 \ox \shift{Q}.
\]

\begin{prop}[{\cite[Ch.~X, Prop.~IV]{GHV76}}]
\label{rank:prop:Chern-Weil-Sullivan}
The homomorphism $w'$
is identified with the Chern--Weil homomorphism
$w : (S \shift{\h^\ast})^\h
\to H^\bl((\Lambda \g^\ast \ox S \shift{\h^\ast})^\h, d_{\g,\h})
\ (\simeq H^\bl(\g, \h; \K))$
via $\vartheta_\Omega$ (or $\epsilon^{-1} \circ \vartheta_\Omega$).
\end{prop}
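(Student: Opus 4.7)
The plan is to verify the identification directly at the cochain level using the explicit formulas that define the Chevalley homomorphism $\vartheta_\Omega$. First, I recall that $w'$ is by construction induced from the inclusion $((S\shift{\h^\ast})^\h, 0) \injto (\Lambda P_{\g^\ast} \ox (S\shift{\h^\ast})^\h, -\delta_{\shift{\tau_{\g,\h}}})$ sending $\shift{Q}$ to $1 \ox \shift{Q}$, while the Chern--Weil homomorphism $w$ (valued in the Cartan model cohomology) is induced from the analogous inclusion $((S\shift{\h^\ast})^\h, 0) \injto ((\Lambda \g^\ast \ox S\shift{\h^\ast})^\h, d_{\g,\h})$, $\shift{Q} \mapsto 1 \ox \shift{Q}$. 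On the other hand, by the definition of $\vartheta_\Omega$ recalled in Section~\ref{rank:subsect:standard_Sullivan}, one has $\vartheta_\Omega(1 \ox \shift{Q}) = 1 \ox \shift{Q}$ for every $Q \in (S \h^\ast)^\h$.

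Combining these two observations, the triangle
\[
\xymatrix{
((S \shift{\h^\ast})^\h, 0) \ar[r]^{w'\ \ \ \ \ \ \ \ } \ar[rd]_{w} & (\Lambda P_{\g^\ast} \ox (S \shift{\h^\ast})^\h, -\delta_{\shift{\tau_{\g,\h}}}) \ar[d]^{\vartheta_\Omega} \\
& ((\Lambda \g^\ast \ox S \shift{\h^\ast})^\h, d_{\g, \h})
}
\]
already commutes at the cochain level. Passing to cohomology with the aid of Fact~\ref{rank:fact:Chevalley} yields the identification of $w'$ with $w$ in the Cartan model formulation. For the version valued in $H^\bl(\g, \h; \K)$, I post-compose with $\psi_V$ and use Fact~\ref{rank:fact:Lie_vs_Cartan} (i.e.\ that $\psi_V$ represents $\epsilon^{-1}$ in cohomology) to conclude $\epsilon^{-1} \circ \vartheta_\Omega \circ w' = \epsilon^{-1} \circ w$.

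There is essentially no obstacle: the claim reduces to the tautology that the two inclusions from $(S\shift{\h^\ast})^\h$ are compatibly normalized by $\shift{Q} \mapsto 1 \ox \shift{Q}$ and that $\vartheta_\Omega$ preserves this normalization by construction. The only point worth flagging is to check, for completeness, that both maps $w$ and $w'$ are indeed cochain maps, i.e.\ that $d_{\g, \h}(1 \ox \shift{Q}) = 0$ and $\delta_{\shift{\tau_{\g,\h}}}(1 \ox \shift{Q}) = 0$; the first follows from $\iota(F_j)(1) = 0$ and the second from the very definition of the Koszul differential. Everything else is immediate from Fact~\ref{rank:fact:Chevalley}.
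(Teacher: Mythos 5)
Your proposal is correct and is essentially identical to the paper's own justification, which consists of the single observation that $w = \vartheta_\Omega \circ w'$ already holds at the cochain level because $\vartheta_\Omega(1 \ox \shift{Q}) = 1 \ox \shift{Q}$. The extra verifications you include (that both inclusions are cochain maps, and the passage to $H^\bl(\g,\h;\K)$ via $\psi_V$) are harmless elaborations of the same argument.
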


Indeed,
$w = \vartheta_\Omega \circ w' : ((S \shift{\h^\ast})^\h, 0)
\to ((\Lambda \g^\ast \ox S \shift{\h^\ast})^\h, d_{\g,\h})$.

\begin{prop}[{\cite[Ch.~X, Cor.~III (1) to Th.~III]{GHV76}}]
\label{rank:prop:kernel_Chern_Weil}
One has
\[
(\ker w =)\,
\ker w'= (S^+ \shift{\g^\ast})^\g|_\h \cdot (S \shift{\h^\ast})^\h.
\]
\end{prop}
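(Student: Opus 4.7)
The plan is to exploit the exterior-degree grading on $\Lambda P_{\g^\ast} \ox (S\shift{\h^\ast})^\h$ to reduce the computation to the first exterior degree, and then to apply Fact~\ref{rank:fact:isom_transgression} that $\tau_\g$ extends to an isomorphism of symmetric algebras.

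First I would record that the Koszul differential $\delta_{\shift{\tau_{\g,\h}}}$ maps $\Lambda^p P_{\g^\ast} \ox (S\shift{\h^\ast})^\h$ into $\Lambda^{p-1} P_{\g^\ast} \ox (S\shift{\h^\ast})^\h$ by its very construction in Section~\ref{rank:subsect:Sullivan_def}. In particular, an element of the form $1 \ox \shift Q$ with $\shift Q \in (S\shift{\h^\ast})^\h$ lies in $\Lambda^0 P_{\g^\ast} \ox (S\shift{\h^\ast})^\h$ and is a coboundary of $-\delta_{\shift{\tau_{\g,\h}}}$ if and only if it belongs to the image of the restriction of $-\delta_{\shift{\tau_{\g,\h}}}$ to $\Lambda^1 P_{\g^\ast} \ox (S\shift{\h^\ast})^\h$. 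By the derivation property and the definition of $\shift{\tau_{\g,\h}}$, for $\alpha \in P_{\g^\ast}$ and $\shift{Q'} \in (S\shift{\h^\ast})^\h$ we have $-\delta_{\shift{\tau_{\g,\h}}}(\alpha \ox \shift{Q'}) = -1 \ox \tau_\g(\alpha)|_\h \cdot \shift{Q'}$. Hence $\ker w'$ is exactly the $\K$-linear span of elements of the form $\tau_\g(\alpha)|_\h \cdot \shift{Q'}$ with $\alpha \in P_{\g^\ast}$ and $\shift{Q'} \in (S\shift{\h^\ast})^\h$.

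It then remains to identify this span with $(S^+\shift{\g^\ast})^\g|_\h \cdot (S\shift{\h^\ast})^\h$. The inclusion $\subseteq$ is immediate, since $\tau_\g(P_{\g^\ast}) \subset (S^+\shift{\g^\ast})^\g$. For the reverse inclusion, Fact~\ref{rank:fact:isom_transgression} provides a graded algebra isomorphism $\shift{\tau_\g}: S\shift{P_{\g^\ast}} \simto (S\shift{\g^\ast})^\g$, so every $P \in (S^+\shift{\g^\ast})^\g$ is a $\K$-linear combination of monomials $\tau_\g(\alpha_1) \cdots \tau_\g(\alpha_k)$ with $k \geq 1$ and $\alpha_j \in P_{\g^\ast}$. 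Since restriction $(\,\cdot\,)|_\h : (S\shift{\g^\ast})^\g \to (S\shift{\h^\ast})^\h$ is a $\K$-algebra homomorphism, any such monomial restricts to $\tau_\g(\alpha_1)|_\h \cdot \bigl(\tau_\g(\alpha_2)|_\h \cdots \tau_\g(\alpha_k)|_\h\bigr)$, in which the parenthesised factor belongs to $(S\shift{\h^\ast})^\h$; multiplying by an arbitrary $\shift Q \in (S\shift{\h^\ast})^\h$ then exhibits $P|_\h \cdot \shift Q$ in the desired form.

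There is no serious obstacle: once one observes that $\delta_{\shift{\tau_{\g,\h}}}$ strictly lowers the exterior degree, the argument amounts to a direct computation combined with a clean application of Fact~\ref{rank:fact:isom_transgression}. The only mild subtlety is to track the sign in $-\delta_{\shift{\tau_{\g,\h}}}$ and to remember that the image of $-\delta_{\shift{\tau_{\g,\h}}}$ in exterior degree $0$ is automatically closed under multiplication by $(S\shift{\h^\ast})^\h$, which is what allows the characterisation of $\ker w'$ to absorb the full factor $(S\shift{\h^\ast})^\h$ on the right-hand side.
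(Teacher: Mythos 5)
Your proof is correct and takes exactly the route the paper intends: the paper's entire justification is the remark that the proposition ``follows easily from the definition of the differential $\delta_{\shift{\tau_{\g,\h}}}$ and Fact~\ref{rank:fact:isom_transgression}'', and your argument is precisely the expansion of that remark. You identify the coboundaries in exterior degree $0$ using the fact that the Koszul differential lowers the $\Lambda^p P_{\g^\ast}$-degree by one, and then use the isomorphism $\shift{\tau_\g}: S\shift{P_{\g^\ast}} \simto (S\shift{\g^\ast})^\g$ to rewrite the resulting span as $(S^+\shift{\g^\ast})^\g|_\h \cdot (S\shift{\h^\ast})^\h$, which is exactly what is needed.
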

This follows easily from the definition of differential
$\delta_{\shift{\tau_{\g,\h}}}$ and
Fact~\ref{rank:fact:isom_transgression}.

\subsection{The case of reductive symmetric pairs}
\label{rank:subsect:symmetric}

If $(\g, \h)$ is a reductive symmetric pair,
i.e.\ $\g$ is a reductive Lie algebra and
$\h = \g^\theta$ for some involution $\theta$ of $\g$,
the following useful results follow:

\begin{fact}\label{rank:fact:symmetric}
Let $(\g, \h)$ be a reductive symmetric pair. Then,
\begin{enumerate}
\item[\textup{(1)}]
\textup{(\cite[Ch.~X, Cor.\ to Prop.~VI]{GHV76})}
$\dim (P_{\g^\ast})^{-\theta} = \rank \g - \rank \h$.
\item[\textup{(2)}]
\textup{(\cite[Ch.~X, Prop.~VII]{GHV76})}
If $\tau_\g$ is a transgression for $\g$
that is compatible with $\theta$,
the following is a graded algebra isomorphism:
\begin{align*}
\Lambda (P_{\g^\ast})^{-\theta} \ox \im w'
&\simto H^\bl(\Lambda P_{\g^\ast} \ox (S \shift{\h^\ast})^\h, -\delta_{\shift{\tau_{\g,\h}}}), \\
\alpha \ox [1 \ox \shift{Q}] &\mapsto [\alpha \ox \shift{Q}].
\end{align*}
\end{enumerate}
\end{fact}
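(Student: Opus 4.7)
My plan handles the two parts of Fact~\ref{rank:fact:symmetric} in turn, each exploiting the $\theta$-compatibility of $\tau_\g$.

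For (1), the $\theta$-equivariant isomorphism $\shift{\tau_\g} \colon S\shift{P_{\g^\ast}} \simto (S\shift{\g^\ast})^\g$ reduces the count to one inside $(S\shift{\g^\ast})^\g$. Choosing a $\theta$-stable Cartan subalgebra $\t \subset \g$ whose fixed part $\t^\theta$ is a Cartan subalgebra of $\h$, one has $\dim \t^{-\theta} = \rank \g - \rank \h$. I would transport via Chevalley restriction to the polynomial algebra $(S\t^\ast)^W$ (which is $\theta$-equivariant by naturality of restriction) and compare $\theta$-eigenspaces of the generator space $I^+/(I^+)^2$ through Hilbert series: the $\theta$-antiinvariant piece accounts for $\dim \t^{-\theta}$ of the $\rank \g$ generators, yielding the claim.

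For (2), the key initial observation is that $(P_{\g^\ast})^{-\theta}$ lies in the kernel of the differential: for $\alpha \in (P_{\g^\ast})^{-\theta}$, $\theta$-compatibility gives $\theta(\tau_\g(\alpha)) = -\tau_\g(\alpha)$, so $\tau_\g(\alpha) \in (S\shift{\g^\ast})^\g$ is odd in the $\shift{\l^\ast}$-variables for $\l = \g^{-\theta}$, and its restriction to $\h$ vanishes. This decomposes the pure Sullivan algebra as a tensor product of DGAs
\[
(\Lambda P_{\g^\ast} \ox (S\shift{\h^\ast})^\h, -\delta_{\shift{\tau_{\g,\h}}}) \simeq (\Lambda (P_{\g^\ast})^{-\theta}, 0) \ox (\Lambda (P_{\g^\ast})^\theta \ox (S\shift{\h^\ast})^\h, -\delta'),
\]
with $\delta'$ the restricted differential. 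Künneth then gives
\[
H^\bl \simeq \Lambda (P_{\g^\ast})^{-\theta} \ox H^\bl(\Lambda (P_{\g^\ast})^\theta \ox (S\shift{\h^\ast})^\h, -\delta').
\]

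The main obstacle will be identifying the second factor with $\im w'$. By part (1), $\dim (P_{\g^\ast})^\theta = \rank \h$ matches the number of polynomial generators of $(S\shift{\h^\ast})^\h$, placing us in the ``equal rank'' case. I would show that $\rest \circ \tau_\g$ sends $(P_{\g^\ast})^\theta$ to a regular sequence in $(S\shift{\h^\ast})^\h$ --- equivalently, that $(S\shift{\h^\ast})^\h$ is free of finite rank over the subalgebra generated by the image. This freeness, a classical feature of restriction of invariants in a reductive symmetric pair, makes $(\Lambda (P_{\g^\ast})^\theta \ox (S\shift{\h^\ast})^\h, -\delta')$ a Koszul complex computing the quotient $(S\shift{\h^\ast})^\h/\bigl(\rest\,\tau_\g((P_{\g^\ast})^\theta)\bigr)$, which lives in even total degree and is exactly $\im w'$. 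Multiplicativity of the identifications then delivers the stated algebra isomorphism $\alpha \ox [1 \ox \shift{Q}] \mapsto [\alpha \ox \shift{Q}]$.
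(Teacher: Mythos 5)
First, note that the paper offers no proof of this Fact: both parts are quoted directly from \cite[Ch.~X]{GHV76}, so your proposal can only be measured against the classical arguments, not against anything in the text.

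Your plan for part (2) is essentially the standard route and its two key ideas are correct: $\theta$-compatibility forces $\theta(\tau_\g(\alpha))=-\tau_\g(\alpha)$ for $\alpha\in(P_{\g^\ast})^{-\theta}$, and any $\theta$-anti-invariant element of $(S\shift{\g^\ast})^\g$ restricts to $0$ on $\h=\g^\theta$; hence $\delta_{\shift{\tau_{\g,\h}}}$ kills $\Lambda(P_{\g^\ast})^{-\theta}$, the algebra splits as a tensor product of differential graded algebras, and K\"unneth applies. Two steps should be made explicit. (a) Why the restricted transgressions of a basis of $(P_{\g^\ast})^\theta$ form a regular sequence: there are exactly $\rank\h$ of them by part (1), and their common zero locus in a Cartan subalgebra $\mathfrak{t}^\theta$ of $\h$ sits inside the zero locus of all of $(S^+\mathfrak{t}^\ast)^{W_\g}$, which is $\{0\}$; a homogeneous system of parameters of length $\rank\h$ in the polynomial ring $(S\shift{\h^\ast})^\h$ is automatically regular. (b) Why the ideal they generate is all of $(S^+\shift{\g^\ast})^\g|_\h\cdot(S\shift{\h^\ast})^\h$: this follows from Fact~\ref{rank:fact:isom_transgression} together with the vanishing of the restrictions of the anti-invariant transgressions, and only then does Proposition~\ref{rank:prop:kernel_Chern_Weil} identify the degree-zero Koszul cohomology with $\im w'$. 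With these completions the argument works, and, consistently with the remark after the Fact, it uses only $\theta$-compatibility of $\tau_\g$, not distinguishedness.

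Part (1), however, has a genuine gap. After the (correct) reductions --- $\theta$-equivariance of $\shift{\tau_\g}$, choice of a fundamental $\theta$-stable Cartan $\mathfrak{t}$ with $\mathfrak{t}^\theta$ a Cartan subalgebra of $\h$, Chevalley restriction to $(S\mathfrak{t}^\ast)^W$ --- the sentence ``the $\theta$-antiinvariant piece accounts for $\dim\mathfrak{t}^{-\theta}$ of the $\rank\g$ generators'' \emph{is} the statement to be proved, and ``compare Hilbert series'' is not yet an argument. A twisted Molien computation does yield $\#\{i:\ep_i=1\}=\max_{w\in W}\dim\mathfrak{t}^{w\theta}$ (where $\ep_i$ are the $\theta$-eigenvalues on a homogeneous generating set), so what is really needed is the inequality $\dim\mathfrak{t}^{w\theta}\le\dim\mathfrak{t}^{\theta}$ for all $w\in W$, i.e.\ that the fundamental Cartan maximizes the fixed part within the twisted coset $W\theta$. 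This is a nontrivial fact about involutions of reductive Lie algebras that your sketch neither states nor proves; without it the pole-order comparison at $t=1$ does not close. Since your proof of (2) invokes (1) to get $\dim(P_{\g^\ast})^{\theta}=\rank\h$, the gap propagates there as well.
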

\begin{rem}
In \cite[Ch.~X, Prop.~VII]{GHV76},
$\tau_\g$ is assumed to be a distinguished transgression,
but its proof is, in fact,
valid for any transgression compatible with $\theta$.
\end{rem}

\subsection{Induced homomorphisms}\label{rank:subsect:induced}

Let $\g$ be a Lie algebra, $\h$ a subalgebra of $\g$ and
$\l$ a subalgebra of $\h$. Then the inclusion
\[
i :
((\Lambda (\g/\h)^\ast)^\h, d) \to ((\Lambda (\g/\l)^\ast)^\l, d)
\]
and the restriction
\[
1 \ox \rest :
((\Lambda \g^\ast \ox S \shift{\h^\ast})^\h, d_{\g, \h}) \to
((\Lambda \g^\ast \ox S \shift{\l^\ast})^\l, d_{\g, \l})
\]
are differential graded algebra homomorphisms.
The following diagram commutes:
\[
\xymatrix{
((\Lambda (\g/\h)^\ast)^\h, d) \ar[r]^{\hspace{-0.5cm} \epsilon}
\ar[d]^{i} &
((\Lambda \g^\ast \ox S \shift{\h^\ast})^\h, d_{\g,\h})
\ar[d]^{1 \ox \rest} \\
((\Lambda (\g/\l)^\ast)^\l, d) \ar[r]^{\hspace{-0.5cm} \epsilon} &
((\Lambda \g^\ast \ox S \shift{\l^\ast})^\l, d_{\g,\l})
}
\]
Suppose, in addition, that
$(\g, \h)$ and $(\g, \l)$ are reductive pairs.
Take a transgression $\tau_\g$ for $\g$ and a graded linear map
$\Omega : P_{\g^\ast} \to
(\Lambda \g^\ast \ox S^+\shift{\g^\ast})^\g$
such that
$d_{\g, \g}(\alpha \ox 1 + \Omega(\alpha))
= -1 \ox \tau_\g(\alpha)$.
Then,
\[
1 \ox \rest : (\Lambda P_{\g^\ast} \ox (S \shift{\h^\ast})^\h,
-\delta_{\shift{\tau_{\g, \h}}})
\to (\Lambda P_{\g^\ast} \ox (S \shift{\l^\ast})^\l,
-\delta_{\shift{\tau_{\g, \l}}})
\]
is a differential graded algebra homomorphism, and the diagram
\[
\xymatrix{
(\Lambda P_{\g^\ast} \ox (S \shift{\h^\ast})^\h,
-\delta_{\shift{\tau_{\g, \h}}})
\ar[r]^{\hspace{0.2cm} \vartheta_\Omega} \ar[d]^{1 \ox \rest} &
((\Lambda \g^\ast \ox S \shift{\h^\ast})^\h, d_{\g,\h})
\ar[d]^{1 \ox \rest} \\
(\Lambda P_{\g^\ast} \ox (S \shift{\l^\ast})^\l,
-\delta_{\shift{\tau_{\g, \l}}})
\ar[r]^{\hspace{0.2cm} \vartheta_\Omega} &
((\Lambda \g^\ast \ox S \shift{\l^\ast})^\l, d_{\g,\l})
}
\]
commutes. In summary,

\begin{prop}\label{rank:prop:Lie_vs_Sullivan}
The homomorphism
\[
1 \ox \rest : H^\bl(\Lambda P_{\g^\ast} \ox (S \shift{\h^\ast})^\h,
-\delta_{\shift{\tau_{\g, \h}}})
\to H^\bl(\Lambda P_{\g^\ast} \ox (S \shift{\l^\ast})^\l,
-\delta_{\shift{\tau_{\g, \l}}})
\]
is identified with the homomorphism
$i : H^\bl(\g, \h; \K) \to H^\bl(\g, \l; \K)$
via $\epsilon^{-1} \circ \vartheta_\Omega$.
\end{prop}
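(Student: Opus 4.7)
The plan is to verify the two commutative diagrams displayed just above the proposition at the level of differential graded algebras, and then pass to cohomology using Fact~\ref{rank:fact:Lie_vs_Cartan} and Fact~\ref{rank:fact:Chevalley}, which identify the cohomologies of the Sullivan models with the relative Lie algebra cohomologies $H^\bl(\g,\h;\K)$ and $H^\bl(\g,\l;\K)$. The goal can be restated as the identity
\[
(\epsilon^{-1} \circ \vartheta_\Omega) \circ (1 \ox \rest)
= i \circ (\epsilon^{-1} \circ \vartheta_\Omega)
\]
at the level of cohomology, which will follow at once once the two DGA squares commute on the nose.

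First I would check that the horizontal maps $i$ and $1 \ox \rest$ are genuinely DGA homomorphisms, which amounts to recalling that $\l \subset \h$ implies $(\Lambda (\g/\h)^\ast)^\h \subset (\Lambda (\g/\l)^\ast)^\l$ as subcomplexes of $\Lambda \g^\ast$, and that the restriction map $S\shift{\h^\ast} \to S\shift{\l^\ast}$ together with the inclusion of invariants is compatible with the Cartan differentials $d_{\g,\h}$ and $d_{\g,\l}$ as well as with the Koszul differentials $\delta_{\shift{\tau_{\g,\h}}}$ and $\delta_{\shift{\tau_{\g,\l}}}$ (the latter because $\shift{\tau_{\g,\l}} = \rest \circ \shift{\tau_{\g,\h}}$ by definition of $\shift{\tau_{\g,\h}}$ and $\shift{\tau_{\g,\l}}$). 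The first diagram, relating $\epsilon$ and $i$, is then immediate: on both sides an element $\alpha \in (\Lambda (\g/\h)^\ast)^\h$ is sent to $\alpha \ox 1$ in $(\Lambda \g^\ast \ox S\shift{\l^\ast})^\l$.

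The substance lies in the second diagram, involving the Chevalley homomorphism $\vartheta_\Omega$. Since both routes around the square are graded algebra homomorphisms, it suffices to check the equality on the generators $P_{\g^\ast}$ and $(S\shift{\h^\ast})^\h$. On $\alpha \in P_{\g^\ast}$ the $\h$-side $\vartheta_\Omega$ gives $\alpha \ox 1 + (1 \ox \rest_\h)(\Omega(\alpha))$, and applying $1 \ox \rest_{\h \to \l}$ afterwards produces $\alpha \ox 1 + (1 \ox \rest_\l)(\Omega(\alpha))$, which is precisely what the $\l$-side $\vartheta_\Omega$ produces after first restricting $\alpha \ox 1$ to $\Lambda P_{\g^\ast} \ox (S\shift{\l^\ast})^\l$ (trivially). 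On $1 \ox \shift{Q}$ with $Q \in (S\h^\ast)^\h$ both routes give $1 \ox \rest(\shift{Q})$. The crucial point is that the \emph{same} graded linear map $\Omega$ is used for $\h$ and $\l$, so the two instances of $\vartheta_\Omega$ differ only by the final restriction, making commutativity tautological.

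I do not expect any real obstacle: every ingredient has already been set up in the preceding subsections, and once the two diagrams commute, passing to cohomology and combining with Facts~\ref{rank:fact:Lie_vs_Cartan} and \ref{rank:fact:Chevalley} yields the proposition.
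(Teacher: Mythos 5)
Your proposal is correct and takes the same route as the paper: the paper states Proposition~\ref{rank:prop:Lie_vs_Sullivan} immediately after displaying the two commutative DGA squares (the $\epsilon$/$i$ square and the $\vartheta_\Omega$/restriction square) with the words ``In summary,'' so the intended proof is precisely to verify those two diagrams on generators and pass to cohomology via Facts~\ref{rank:fact:Lie_vs_Cartan} and~\ref{rank:fact:Chevalley}, which is what you do.
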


\subsection{A spectral sequence for the Sullivan models of
reductive pairs}\label{rank:subsect:ss2}

As in Section~\ref{rank:subsect:induced},
let $(\g, \h)$ be a reductive pair and $\l$ a subalgebra of $\h$
such that $(\g, \l)$ is a reductive pair.
Let $\tau_\g$ and $\tau_\h$ be transgressions of $\g$ and $\h$,
respectively. We identify $(S \shift{\h^\ast})^\h$ with
$S \shift{P_{\h^\ast}}$ via $\shift{\tau_\h}$.
We thus denote by $\delta_{P_{\h^\ast}}$ the Koszul differential on
$\Lambda P_{\h^\ast} \ox (S \shift{\h^\ast})^\h$ defined by
\[
\delta_{P_{\h^\ast}}(\beta \ox 1) = \tau_{\h}(\beta), \quad
\delta_{P_{\h^\ast}}(1 \ox \shift{Q}) = 0 \qquad
(\beta \in P_{\h^\ast},\ Q \in (S \h^\ast)^\h).
\]
Let us apply the spectral sequence constructed in
Section~\ref{rank:sect:Sullivan}
to the differential graded algebra homomorphism
\[
1 \ox \rest : (\Lambda P_{\g^\ast} \ox (S \shift{\h^\ast})^\h,
-\delta_{\shift{\tau_{\g, \h}}})
\to (\Lambda P_{\g^\ast} \ox (S \shift{\l^\ast})^\l,
-\delta_{\shift{\tau_{\g, \l}}}).
\]
By Proposition~\ref{rank:prop:rel_Sullivan},
the differential graded algebra homomorphism
\[
m : (\Lambda P_{\g^\ast} \ox (S \shift{\h^\ast})^\h \ox
\Lambda P_{\h^\ast} \ox (S \shift{\l^\ast})^\l,
-\delta_{\shift{\tau_{\g, \h}}} - \delta_{\shift{\tau_{\h, \l}}} +
\delta_{P_{\h^\ast}})
\to (\Lambda P_{\g^\ast} \ox (S \shift{\l^\ast})^\l,
-\delta_{\shift{\tau_{\g, \l}}})
\]
defined by
\begin{align*}
m(\alpha \ox \shift{Q} \ox \beta \ox \shift{R})
&= 0 \quad
(\alpha \in \Lambda P_{\g^\ast},\ Q \in (S \h^\ast)^\h,\
\beta \in \Lambda^+ P_{\h^\ast},\ R \in (S \l^\ast)^\l), \\
m(\alpha \ox \shift{Q} \ox 1 \ox \shift{R})
&= \alpha \ox \shift{Q}|_{\l} \cdot \shift{R}
\quad\quad \
(\alpha \in \Lambda P_{\g^\ast},\
Q \in (S \h^\ast)^\h,\ R \in (S \l^\ast)^\l)\
\end{align*}
is a Sullivan model for the differential graded algebra homomorphism
$1 \ox \rest : (\Lambda P_{\g^\ast} \ox
(S \shift{\h^\ast})^\h, -\delta_{\shift{\tau_{\g, \h}}})
\to (\Lambda P_{\g^\ast} \ox (S \shift{\l^\ast})^\l,
-\delta_{\shift{\tau_{\g, \l}}})$.
Let $(F^p)_{p \in \N}$
be a filtration of the differential graded algebra
$(\Lambda P_{\g^\ast} \ox (S \shift{\h^\ast})^\h \ox
\Lambda P_{\h^\ast} \ox (S \shift{\l^\ast})^\l,
-\delta_{\shift{\tau_{\g, \h}}} - \delta_{\shift{\tau_{\h, \l}}} +
\delta_{P_{\h^\ast}})$
defined by
$F^p = \bigoplus_{k \geq p}
(\Lambda P_{\g^\ast} \ox (S \shift{\h^\ast})^\h)^k \ox
\Lambda P_{\h^\ast} \ox (S \shift{\l^\ast})^\l$.
Applying Proposition~\ref{rank:prop:ss} to this setting,
we have the following:
\begin{cor}\label{rank:cor:ss2}
Let $(E^{p,q}_r, d_r)$ be the spectral sequence
associated with the filtration $(F^p)_{p \in \N}$. Then,
\begin{enumerate}
\item[\textup{(1)}]
$E^{p,q}_2 = H^p(\Lambda P_{\g^\ast} \ox (S \shift{\h^\ast})^\h,
-\delta_{\shift{\tau_{\g, \h}}}) \ox
H^q(\Lambda P_{\h^\ast} \ox (S \shift{\l^\ast})^\l,
-\delta_{\shift{\tau_{\h, \l}}})$.
\item[\textup{(2)}]
The spectral sequence $(E_r^{p,q}, d_r)$ converges to
$H^{p+q}(\Lambda P_{\g^\ast} \ox (S \shift{\l^\ast})^\l,
-\delta_{\shift{\tau_{\g, \l}}})$.
\item[\textup{(3)}]
The homomorphism
\[
1 \ox \rest : H^p(\Lambda P_{\g^\ast} \ox (S \shift{\h^\ast})^\h,
-\delta_{\shift{\tau_{\g, \h}}})
\to H^p(\Lambda P_{\g^\ast} \ox (S \shift{\l^\ast})^\l,
-\delta_{\shift{\tau_{\g, \l}}})
\]
is factorized as
\[
H^p(\Lambda P_{\g^\ast} \ox (S \shift{\h^\ast})^\h,
-\delta_{\shift{\tau_{\g, \h}}})
\simto E_2^{p,0}
\surjto E_\infty^{p,0}
\injto H^p(\Lambda P_{\g^\ast} \ox (S \shift{\l^\ast})^\l,
-\delta_{\shift{\tau_{\g, \l}}}).
\]
\end{enumerate}
\end{cor}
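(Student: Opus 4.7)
The plan is to derive Corollary~\ref{rank:cor:ss2} as a direct specialization of Proposition~\ref{rank:prop:ss}. First I would fix the translation between the two settings. Set $U = P_{\g^\ast}$, $V = P_{\h^\ast}$, and $W = P_{\l^\ast}$; these are finite-dimensional, oddly and positively graded since $\g$, $\h$, $\l$ are reductive (Section~\ref{rank:subsect:trans}). Using the graded algebra isomorphisms $\shift{\tau_\h} : S \shift{V} \simto (S \shift{\h^\ast})^\h$ and $\shift{\tau_\l} : S \shift{W} \simto (S \shift{\l^\ast})^\l$ provided by Fact~\ref{rank:fact:isom_transgression}, I would identify the two Sullivan algebras appearing in the corollary with the pure Sullivan algebras $\Lambda U \ox S \shift{V}$ and $\Lambda U \ox S \shift{W}$ of Section~\ref{rank:subsect:rel_Sullivan}.

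Under these identifications, the homomorphisms $f : S\shift{U} \to S\shift{V}$ and $g : S\shift{V} \to S\shift{W}$ of Proposition~\ref{rank:prop:ss} are defined so as to correspond to $\shift{\tau_{\g, \h}}$ and to the restriction $(S \shift{\h^\ast})^\h \to (S \shift{\l^\ast})^\l$, respectively. From the defining formula $\shift{\tau_{\g, \h}}(\shift{\Omega}) = \shift{\tau_\g}(\shift{\Omega})|_\h$ and its analogues for $\shift{\tau_{\h, \l}}$ and $\shift{\tau_{\g, \l}}$, an easy computation shows that the composite $gf$ corresponds to $\shift{\tau_{\g, \l}}$, so that $\delta_{gf} \leftrightarrow \delta_{\shift{\tau_{\g, \l}}}$, $\delta_f \leftrightarrow \delta_{\shift{\tau_{\g, \h}}}$, and $\delta_g \leftrightarrow \delta_{\shift{\tau_{\h, \l}}}$. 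Moreover, the differential $\delta_V$ of Section~\ref{rank:sect:Sullivan} (the Koszul differential associated with the identity on $S \shift{V}$) corresponds precisely to the differential $\delta_{P_{\h^\ast}}$ defined in Section~\ref{rank:subsect:ss2}. These translations convert the auxiliary differential graded algebra and the Sullivan model $m$ of Proposition~\ref{rank:prop:rel_Sullivan} into the ones written in the text preceding the corollary.

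Finally, the filtration $(F^p)_{p \in \N}$ introduced in the corollary is the transport of the filtration used to derive Proposition~\ref{rank:prop:ss}, so each of the items (1), (2), (3) of the corollary is an immediate translation of the corresponding item of that proposition. I do not anticipate a genuine obstacle here: once the transgression isomorphisms from Fact~\ref{rank:fact:isom_transgression} are in place, the entire argument reduces to bookkeeping, tracking the differentials and the filtration through the identifications.
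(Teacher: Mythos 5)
Your proposal is correct and matches the paper's own treatment: the corollary is obtained by applying Proposition~\ref{rank:prop:ss} with $U = P_{\g^\ast}$, $V = P_{\h^\ast}$, $W = P_{\l^\ast}$, using the transgression isomorphisms of Fact~\ref{rank:fact:isom_transgression} to identify $(S\shift{\h^\ast})^\h$ and $(S\shift{\l^\ast})^\l$ with $S\shift{V}$ and $S\shift{W}$, under which $f$, $g$, $gf$, $\delta_V$ correspond to $\shift{\tau_{\g,\h}}$, $\rest$, $\shift{\tau_{\g,\l}}$, $\delta_{P_{\h^\ast}}$. The paper leaves exactly this bookkeeping implicit, so your write-up is if anything more explicit than the original.
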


\begin{rem}\label{rank:rem:cptconnss}
Suppose that
$G$ is a connected compact Lie group with Lie algebra $\g$,
$H$ is a connected closed subgroup of $G$ with Lie algebra $\h$ and
$L$ is a connected closed subgroup of $H$ with Lie algebra $\l$.
Then, our spectral sequence may be seen as
a Sullivan model version of the Leray--Serre spectral sequence for
the fibre bundle $G/L \to G/H$ (cf.\ Remark~\ref{rank:rem:cptconn}).
\end{rem}

\section{Main theorem}\label{rank:sect:proof}
We retain the notations of Section~\ref{rank:sect:Cartan}.

\subsection{Main theorem}
Let us prove the following theorem, which gives some conditions
equivalent to the injectivity of the homomorphism
$i : H^\bl(\g, \h; \K) \to H^\bl(\g, \k_\h; \K)$.
Recall that, when $\K = \R$ and a pair $(\g, \h)$
comes from a homogeneous space $G/H$ of reductive type,
the injectivity of $i$ is a necessary condition for the existence
of a compact manifold locally modelled on $G/H$
(cf.\ Fact~\ref{rank:fact:previous}).

\begin{thm}\label{rank:thm:main}
Let $(\g, \h)$ be a reductive pair over a field $\K$
of characteristic $0$ and $\theta$ an involution of $\g$
such that $\theta(\h) = \h$.
Put $\k_\h = \h^\theta$.
Let $\tau_\g : P_{\g^\ast} \to (S \shift{\g^\ast})^\g$
be a transgression for $\g$.
Let $\tau_\h : P_{\h^\ast} \to (S \shift{\h^\ast})^\h$
be a transgression for $\h$ that is compatible with $\theta$.
Then, the following conditions are all equivalent:
\begin{enumerate}
\item[\textup{(i)}] The homomorphism
$i : H^\bl(\g, \h; \K) \to H^\bl(\g, \k_\h; \K)$
is injective.
\item[\textup{(ii)}] The homomorphism
$i|_{\im w} : \im w \to H^\bl(\g, \k_\h; \K)$
is injective, where
$w : (S \shift{\h^\ast})^\h \to H^\bl(\g, \h; \K)$
is the Chern--Weil homomorphism.
\item[\textup{(iii)}] The homomorphism
\[
1 \ox \rest : H^\bl(\Lambda P_{\g^\ast} \ox (S \shift{\h^\ast})^\h,
-\delta_{\shift{\tau_{\g, \h}}})
\to H^\bl(\Lambda P_{\g^\ast} \ox (S \shift{\k_\h^\ast})^{\k_\h},
-\delta_{\shift{\tau_{\g, \k_\h}}})
\]
is injective.
\item[\textup{(iv)}] The homomorphism
\[
(1 \ox \rest)|_{\im w'} : \im w'
\to H^\bl(\Lambda P_{\g^\ast} \ox (S \shift{\k_\h^\ast})^{\k_\h},
-\delta_{\shift{\tau_{\g, \k_\h}}})
\]
is injective, where
$w': (S \shift{\h^\ast})^\h
\to H^\bl(\Lambda P_{\g^\ast} \ox (S \shift{\h^\ast})^\h,
-\delta_{\shift{\tau_{\g,\h}}})$
 is defined by $w'(\shift{Q}) = [1 \ox \shift{Q}]$.
\item[\textup{(v)}] $((S^+ \h^\ast)^\h)^{-\theta}
\subset (S^+ \g^\ast)^\g|_\h \cdot (S \h^\ast)^\h$.
\item[\textup{(vi)}] The linear map
\begin{align*}
&\overline{\rest} :
((S^+ \g^\ast)^\g / ( (S^+ \g^\ast)^\g \cdot (S^+ \g^\ast)^\g ))^{-\theta} \\
&\qquad\qquad\qquad \to
((S^+ \h^\ast)^\h / ( (S^+ \h^\ast)^\h \cdot (S^+ \h^\ast)^\h ))^{-\theta}
\end{align*}
induced from the restriction map
$(S \g^\ast)^\g \to (S \h^\ast)^\h$ is surjective.
\item[\textup{(vii)}] The linear map
$\rest : (P_{\g^\ast})^{-\theta} \to (P_{\h^\ast})^{-\theta}$
induced from the restriction map
$(\Lambda \g^\ast)^\g \to (\Lambda \h^\ast)^\h$ is surjective.
\item[\textup{(viii)}] The spectral sequence
\begin{align*}
E^{p,q}_2
&= H^p(\Lambda P_{\g^\ast} \ox (S \shift{\h^\ast})^\h,
-\delta_{\shift{\tau_{\g, \h}}}) \ox
H^q(\Lambda P_{\h^\ast} \ox (S \shift{\k_\h^\ast})^{\k_\h},
-\delta_{\shift{\tau_{\h, \k_\h}}}) \\
& \qquad \Rightarrow
H^{p+q}(\Lambda P_{\g^\ast} \ox (S \shift{\k_\h^\ast})^{\k_\h},
-\delta_{\shift{\tau_{\g, \k_\h}}})
\end{align*}
defined as in Corollary~\ref{rank:cor:ss2}
collapses at the $E_2$-term.
\end{enumerate}
\end{thm}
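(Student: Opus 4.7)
My plan is to establish the eight conditions as equivalent via a cycle that combines the Sullivan-model translations of Section~\ref{rank:sect:Cartan} with the spectral sequence of Corollary~\ref{rank:cor:ss2}. The equivalences (i)$\Leftrightarrow$(iii) and (ii)$\Leftrightarrow$(iv) are immediate from Propositions~\ref{rank:prop:Lie_vs_Sullivan} and \ref{rank:prop:Chern-Weil-Sullivan}, and (iii)$\Rightarrow$(iv) is the tautological restriction of $1\ox\rest$ to $\im w'$.

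For the easy chain (iv)$\Rightarrow$(v)$\Leftrightarrow$(vi)$\Leftrightarrow$(vii), the key elementary observation is that any $Q\in((S^+\h^\ast)^\h)^{-\theta}$ vanishes identically on $\k_\h=\h^\theta$: since $\theta=1$ on $\k_\h$, such $Q$ satisfies $Q=-Q=0$ there. Hence $(1\ox\rest)(w'(\shift Q))=[1\ox\shift{Q|_{\k_\h}}]=0$, and (iv) together with Proposition~\ref{rank:prop:kernel_Chern_Weil} give (v). I will derive (v)$\Leftrightarrow$(vi) by induction on degree, exploiting that $\theta$-decomposing a product $a\cdot b$ produces cross-terms $a^+b^-+a^-b^+$ whose $(-\theta)$-pieces inductively lie in the ideal $(S^+\g^\ast)^\g|_\h\cdot(S\h^\ast)^\h$. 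For (vi)$\Leftrightarrow$(vii), I will use the $\theta$-equivariant Cartan-map isomorphism (Fact~\ref{rank:fact:isom_Cartan} and Section~\ref{rank:subsect:trans}) identifying the indecomposables of $(S^+\shift{\g^\ast})^\g$ with $P_{\g^\ast}$, together with the compatibility of restriction maps modulo decomposables.

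The core of the proof is the spectral-sequence direction (v)$\Rightarrow$(viii)$\Rightarrow$(iii). For (viii)$\Rightarrow$(iii), Corollary~\ref{rank:cor:ss2}(3) factors $1\ox\rest$ as $H^p\simto E_2^{p,0}\surjto E_\infty^{p,0}\injto H^p$ of the target; collapse forces the middle surjection to be an isomorphism, giving injectivity. For (v)$\Rightarrow$(viii), Fact~\ref{rank:fact:symmetric}(2) applied to $(\h,\k_\h)$ and the $\theta$-compatible $\tau_\h$ yields $E_2^{0,\ast}\cong\Lambda(P_{\h^\ast})^{-\theta}\ox\im w''$ as a graded algebra, where $w''$ denotes the Chern--Weil homomorphism of the $(\h,\k_\h)$-Sullivan model. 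I will show both generating sets are permanent cycles: the Chern--Weil classes $w''(\shift R)$, represented by $1\ox 1\ox 1\ox\shift R$, are annihilated by each of the three summands of the total differential, while for $\beta\in(P_{\h^\ast})^{-\theta}$ of degree $2k-1$ the key computation is
\[
(-\delta_{\shift{\tau_{\g,\h}}}-\delta_{\shift{\tau_{\h,\k_\h}}}+\delta_{P_{\h^\ast}})(1\ox 1\ox\beta\ox 1)=1\ox\tau_\h(\beta)\ox 1\ox 1,
\]
the $\delta_{\shift{\tau_{\h,\k_\h}}}$ term vanishing because $\tau_\h(\beta)\in((S^+\h^\ast)^\h)^{-\theta}$ (by $\theta$-compatibility of $\tau_\h$) restricts to zero on $\k_\h$. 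This places $d_r\beta=0$ for $r<2k$ and identifies $d_{2k}\beta$ with $w'(\shift{\tau_\h(\beta)})\in E_2^{2k,0}$, which vanishes by (v) and Proposition~\ref{rank:prop:kernel_Chern_Weil}. The Leibniz rule then propagates cycle-ness through the Künneth decomposition $E_2^{p,q}=E_2^{p,0}\ox E_2^{0,q}$, yielding collapse.

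The main obstacle is this collapse argument: it requires simultaneously controlling the symmetric-pair decomposition of the fiber cohomology, the precise computation of the total differential on each generator, and the kernel description in Proposition~\ref{rank:prop:kernel_Chern_Weil}. Because the filtration is by degree in the first two factors, different primitive generators $\beta$ first hit a nontrivial $d_r$ at different indices $r=2k$, so one cannot bound $r$ uniformly; one must verify cycle-ness generator by generator and then invoke multiplicativity of the spectral sequence to extend to the whole $E_2$ page.
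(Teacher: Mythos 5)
Your proposal is correct and follows essentially the same route as the paper: the trivial and Sullivan-model translations for (i)--(iv), the observation that $\theta$-anti-invariant polynomials vanish on $\k_\h$ for (iv)$\Rightarrow$(v), the anti-symmetrization/induction-on-degree argument for (v)$\Leftrightarrow$(vi), the Cartan-map diagram for (vi)$\Leftrightarrow$(vii), and the collapse argument via Fact~\ref{rank:fact:symmetric}~(2), the $\theta$-compatibility of $\tau_\h$, and Proposition~\ref{rank:prop:kernel_Chern_Weil} for (v)$\Rightarrow$(viii)$\Rightarrow$(iii). Your computation of the total differential on $1\ox 1\ox\beta\ox 1$ and the generator-by-generator treatment with Leibniz propagation match the paper's induction on $r$.
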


\begin{rem}[cf.\ Remark~\ref{rank:rem:main_rephrase_short}]
\label{rank:rem:main_rephrase}
Suppose that
$G$ is a connected compact Lie group with Lie algebra $\g$ and
$H$ is a connected closed subgroup of $G$ with Lie algebra $\h$.
Suppose that the involution $\theta$ of $\g$
lifts to an involution of $G$ such that $\theta(H) = H$.
Put $K_H = H^{\theta}$ and let $\pi : G/K_H \to G/H$
denote the projection.
Then, the conditions (i), (ii) and (viii)
are respectively rephrased as follows:
\begin{enumerate}
\item[\textup{(i$'$)}] The homomorphism
$\pi^\ast : H^\bl(G/H; \R) \to H^\bl(G/K_H; \R)$ is injective.
\item[\textup{(ii$'$)}] The homomorphism
$\pi^\ast|_{\im w} : \im w \to H^\bl(G/K_H; \R)$ is injective, where
$w : (S \shift{\h^\ast})^\h \to H^\bl(G/H; \R)$
is the Chern--Weil homomorphism for the principal $H$-bundle
$G \to G/H$.
\item[\textup{(viii$'$)}] The Leray--Serre spectral sequence
\[
E^{p,q}_2 = H^p(G/H; \R) \ox H^q(H/K_H; \R)
\Rightarrow H^{p+q}(G/K_H; \R)
\]
for the fibre bundle $\pi : G/K_H \to G/H$
collapses at the $E_2$-term.
\end{enumerate}
(cf.\ Remarks~\ref{rank:rem:cptconn}, \ref{rank:rem:cptconnCW} and
\ref{rank:rem:cptconnss}).
Theorem~\ref{rank:thm:main}
says that these conditions are all equivalent,
and they are also equivalent to the algebraic conditions (v)--(vii).
\end{rem}
\begin{proof}[Proof of Theorem~\ref{rank:thm:main}]
(i) $\Rightarrow$ (ii).
Trivial.

(iii) $\Rightarrow$ (iv).
Trivial.

(i) $\Leftrightarrow$ (iii).
This follows from Proposition~\ref{rank:prop:Lie_vs_Sullivan}.

(ii) $\Leftrightarrow$ (iv).
This follows from Propositions~\ref{rank:prop:Chern-Weil-Sullivan}
and \ref{rank:prop:Lie_vs_Sullivan}.

(iv) $\Rightarrow$ (v).
Take any $Q \in ((S \h^\ast)^\h)^{-\theta}$. Then we have
$Q|_{\k_\h} = 0$. By (iv), $[1 \ox \shift{Q}] = 0$ in
$H^\bl(\Lambda P_{\g^\ast} \ox (S \shift{\h^\ast})^\h,
-\delta_{\shift{\tau_{\g,\h}}})$.
This means $Q \in (S^+ \g^\ast)^\g|_\h \cdot (S \h^\ast)^\h$
by Proposition~\ref{rank:prop:kernel_Chern_Weil}.

(v) $\Rightarrow$ (vi).
Take any
$\overline{Q} \in ( (S^+ \h^\ast)^\h /
( (S^+ \h^\ast)^\h \cdot (S^+ \h^\ast)^\h ) )^{-\theta}$.
Let $Q \in (S^+ \h^\ast)^\h$ be a representative of $\overline{Q}$.
By (v), we can write
\[
\frac{Q - \theta(Q)}{2} = P|_\h + \sum_{i=1}^r P_i|_\h \cdot Q_i
\qquad (P, P_i \in (S^+ \g^\ast)^\g,\ Q_i \in (S^+ \h^\ast)^\h).
\]
Put $P' = (P - \theta(P))/2$. Then $\overline{P'} \in
( (S^+ \g^\ast)^\g /
( (S^+ \g^\ast)^\g \cdot (S^+ \g^\ast)^\g ) )^{-\theta}$
and $\overline{P'|_\h} = \overline{Q}$.

(vi) $\Rightarrow$ (v).
We shall prove
\[
((S^n \h^\ast)^\h)^{-\theta}
\subset (S^+ \g^\ast)^\g|_\h \cdot (S \h^\ast)^\h \tag{$\dagger_n$}
\]
by induction on $n$.
Assume that ($\dagger_m$) is true for $m \leq n-1$.
Let us take any $Q \in ((S^n \h^\ast)^\h)^{-\theta}$.
By (vi), we can write
\begin{align*}
Q = P|_\h + \sum_{i=1}^r Q_i \cdot Q'_i
\qquad (&P \in (S^n \g^\ast)^\g,\ Q_i \in (S^{m_i}\h^\ast)^\h, \\
&Q'_i \in (S^{n-m_i} \h^\ast)^\h,\ 1 \leq m_i \leq n-1)
\end{align*}
Then,
\begin{align*}
Q = \frac{1}{2} (Q - \theta(Q))
= \frac{1}{2}(P - \theta(P))|_\h
+ \frac{1}{4}
\sum_{i=1}^r ( &(Q_i - \theta(Q_i))(Q'_i + \theta(Q'_i)) \\
\quad  &+ (Q_i + \theta(Q_i))(Q'_i - \theta(Q'_i)) ).
\end{align*}
We have
\[
Q_i - \theta(Q_i),\ Q'_i - \theta(Q'_i)
\in (S^+ \g^\ast)^\g|_\h \cdot (S \h^\ast)^\h
\]
by the induction hypothesis, and therefore
$Q \in (S^+ \g^\ast)^\g|_\h \cdot (S \h^\ast)^\h$.
Thus ($\dagger_n$) is also true.

(vi) $\Leftrightarrow$ (vii).
This follows from commutativity of the diagram
\[
\xymatrix{
\left( (S^+ \shift{\g^\ast})^\g / ( (S^+ \shift{\g^\ast})^\g \cdot
(S^+ \shift{\g^\ast})^\g ) \right)^{-\theta}
\ar[r]^{\qquad\qquad\quad\ \sim}_{\qquad\qquad\quad\
\overline{\rho_\g}} \ar[d]_{\overline{\rest}} &
(P_{\g^\ast})^{-\theta} \ar[d]^{\rest} \\
\left( (S^+ \shift{\h^\ast})^\h / ( (S^+ \shift{\h^\ast})^\h \cdot
(S^+ \shift{\h^\ast})^\h ) \right)^{-\theta}
\ar[r]^{\qquad\qquad\quad\ \sim}_{\qquad\qquad\quad\
\overline{\rho_\h}} &
(P_{\h^\ast})^{-\theta},
}
\]
where $\overline{\rho_\g}$ and $\overline{\rho_\h}$
are the linear isomorphisms induced from the Cartan maps.

(v) $\Rightarrow$ (viii).
We shall prove $d_r = 0 \ (r \geq 2)$ by induction on $r$.
Let us assume that $d_s = 0$ for $2 \leq s \leq r-1$. Then
\[
E_r^{p,q} = E_2^{p,q}
= H^p(\Lambda P_{\g^\ast} \ox (S \shift{\h^\ast})^\h,
-\delta_{\shift{\tau_{\g, \h}}}) \ox
H^q(\Lambda P_{\h^\ast} \ox (S \shift{\k_\h^\ast})^{\k_\h},
-\delta_{\shift{\tau_{\h, \k_\h}}}).
\]
By Leibniz's rule, to prove $d_r = 0$,
it suffices to see that $d_r|_{E_r^{0,q}} = 0$ for all $q \geq 0$.
Moreover, by Fact~\ref{rank:fact:symmetric}~(2)
and again by Leibniz's rule, it is enough to prove that
\begin{itemize}
\item $d_r([1 \ox 1] \ox [1 \ox \shift{R}]) = 0$
for any $R \in (S \k_\h^\ast)^{\k_\h}$.
\item $d_r([1 \ox 1] \ox [\beta \ox 1]) = 0$
for any $\beta \in (P_{\h^\ast})^{-\theta}$.
\end{itemize}
By construction of the spectral sequence, we have
$d_r([1 \ox 1] \ox [1 \ox \shift{R}]) = 0$ and
\[
d_r([1 \ox 1] \ox [\beta \ox 1]) =
\begin{cases}
[1 \ox \tau_\h(\beta)] \ox [1 \ox 1]
&\text{if $\beta \in (P^{r-1}_{\h^\ast})^{-\theta}$}, \\
0
&\text{if $\beta \in (P^q_{\h^\ast})^{-\theta},\ q \neq r-1$}.
\end{cases}
\]
Since $\tau_\h$ is taken to be compatible with $\theta$,
it follows that
$\tau_\h(\beta) \in ((S \shift{\h^\ast})^\h)^{-\theta}$.
By (v), we have $\tau_\h(\beta)
\in (S^+ \shift{\g^\ast})^\g|_\h \cdot (S \shift{\h^\ast})^\h$.
This implies that $[1 \ox \tau_\h(\beta)] = 0$ in
$H^\bl(\Lambda P_{\g^\ast} \ox (S \shift{\h^\ast})^\h,
-\delta_{\shift{\tau_{\g, \h}}})$
by Proposition~\ref{rank:prop:kernel_Chern_Weil}.
We have thus proved $d_r = 0$.

(viii) $\Rightarrow$ (iii).
This follows immediately from Corollary~\ref{rank:cor:ss2}~(3).
\end{proof}

\subsection{Proof of Conjecture~\ref{rank:conj:rank}}
\label{rank:subsect:proof_short}

Suppose that the inequality
$\rank G - \rank K < \rank H - \rank K_H$
holds. Then, the linear map
$\rest : (P_{\g^\ast})^{-\theta} \to (P_{\h^\ast})^{-\theta}$
cannot be surjective because
$\dim {(P_{\g^\ast})^{-\theta}} = \rank G - \rank K$ and
$\dim {(P_{\h^\ast})^{-\theta}} = \rank H - \rank K_H$
(Fact~\ref{rank:fact:symmetric}~(1)).
Applying Theorem~\ref{rank:thm:main}
$\text{(i)} \Rightarrow \text{(vii)}$
and Fact~\ref{rank:fact:previous}, we conclude the nonexistence of
compact manifolds locally modelled on $G/H$ and, in particular,
of compact Clifford--Klein forms of $G/H$. \qed

\begin{acknow}
The author expresses his sincere thanks to Toshiyuki Kobayashi
for his valuable advice and warm encouragement.
Thanks are also due to an anonymous referee for
comments and suggestions that improved the presentation of
this paper.
This work was supported by JSPS KAKENHI Grant Numbers 14J08233 and
17H06784, and the Program for Leading Graduate Schools, MEXT, Japan.
\end{acknow}

\noindent
\textsc{Department of Mathematics, Graduate School of Science, \\
Kyoto University, \\
Kitashirakawa Oiwake-cho, Sakyo-ku, Kyoto 606-8502, Japan} \\
\textit{Email address}: \texttt{yosuke.m@math.kyoto-u.ac.jp}


\begin{thebibliography}{99}

  \bibitem{Ben96}
  Y. Benoist,
  Actions propres sur les espaces homog\`enes r\'eductifs.
  \textit{Ann.\ of Math.\ (2)} \textbf{144} (1996), 315--347.

  \bibitem{Car51a}
  H. Cartan,
  Notions d'alg\`ebre diff\'erentielle; application aux groupes de Lie
  et aux vari\'et\'es o\`u op\`ere un groupe de Lie.
  \textit{Colloque de topologie (espaces fibr\'es), Bruxelles, 1950.}
  pp.~15--27. Georges Thone, Li\`ege; Masson et Cie., Paris, 1951;
  reprinted in \cite{Gui-Ste99}.

  \bibitem{Car51b}
  \bysame,
  La transgression dans un groupe de Lie et dans un espace fibr\'e principal.
  \textit{Colloque de topologie (espaces fibr\'es), Bruxelles, 1950.}
  pp.~57--71. Georges Thone, Li\`ege; Masson et Cie., Paris, 1951;
  reprinted in \cite{Gui-Ste99}.

  \bibitem{Che-Eil48}
  C. Chevalley and S. Eilenberg,
  Cohomology theory of Lie groups and Lie algebras.
  \textit{Trans.\ Amer.\ Math.\ Soc.}\ \textbf{63} (1948), 85--124.

  \bibitem{FHT01}
  Y. F\'elix, S. Halperin and J.-C. Thomas,
  \textit{Rational Homotopy Theory.}
  Graduate Texts in Mathematics, 205. Springer-Verlag, New York, 2001.

  \bibitem{GHV76}
  W. Greub, S. Halperin and R. Vanstone,
  \textit{Connections, Curvature, and Cohomology. Volume III:
  Cohomology of Principal Bundles and Homogeneous Spaces.}
  Pure and Applied Mathematics, Vol. 47-III.
  Academic Press, New York--London, 1976.

  \bibitem{Gui-Ste99}
  V. W. Guillemin and S. Sternberg,
  \textit{Supersymmetry and Equivariant de Rham Theory.}
  With an appendix containing two reprints by Henri Cartan.
  Mathematics Past and Present. Springer-Verlag, Berlin, 1999.

  \bibitem{Kob89Ann}
  T. Kobayashi,
  Proper action on a homogeneous space of reductive type.
  \textit{Math.\ Ann.}\ \textbf{285} (1989), 249--263.

  \bibitem{Kob89Geom}
  \bysame,
  Homogeneous spaces with indefinite-metric and discontinuous groups.
  \textit{The 36th Geometry Symposium in Japan,} pp.~104--116, 1989 (in Japanese),
  available at: \\
  {\small \url{http://www.ms.u-tokyo.ac.jp/~toshi/texpdf/tk12-kika-sympo89.pdf}}

  \bibitem{Kob92Duke}
  \bysame,
  A necessary condition for the existence of
  compact Clifford--Klein forms of homogeneous spaces of reductive type.
  \textit{Duke Math.\ J.}\ \textbf{67} (1992), 653--664.

  \bibitem{Kob-Ono90}
  T. Kobayashi and K. Ono,
  Note on Hirzebruch's proportionality principle.
  \textit{J. Fac.\ Sci.\ Univ.\ Tokyo Sect.\ IA Math.}\ \textbf{37} (1990),
  71--87.

  \bibitem{Mar97}
  G. Margulis,
  Existence of compact quotients of homogeneous spaces,
  measurably proper actions, and decay of matrix coefficients.
  \textit{Bull.\ Soc.\ Math.\ France} \textbf{125} (1997), 447--456.

  \bibitem{Mor15JDG}
  Y. Morita,
  A topological necessary condition for the existence
  of compact Clifford--Klein forms.
  \textit{J. Differential Geom.}\ \textbf{100} (2015), 533--545.

  \bibitem{Mor15+}
  \bysame,
  Homogeneous spaces of nonreductive type
  that do not model any compact manifold.
  \textit{Publ.\ Res.\ Inst.\ Math.\ Sci.}\ \textbf{53} (2017), 287--298.

  \bibitem{Oni94}
  A. L. Onishchik,
  \textit{Topology of Transitive Transformation Groups.}
  Johann Ambrosius Barth Verlag GmbH, Leipzig, 1994.


  \bibitem{Sul77}
  D. Sullivan,
  Infinitesimal computations in topology.
  \textit{Inst.\ Hautes \'Etudes Sci.\ Publ.\ Math.}\
  \textbf{47} (1977), 269--331.

  \bibitem{Tho15+}
  N. Tholozan,
  Volume and non-existence of compact Clifford--Klein forms.
  arXiv:1511.09448v2, preprint.

  \bibitem{Tho+}
  \bysame,
  Chern--Simons theory and cohomological invariant of character varieties.
  In preparation.

  \bibitem{Zim94}
  R. J. Zimmer,
  Discrete groups and non-Riemannian homogeneous spaces.
  \textit{J. Amer.\ Math.\ Soc.}\ \textbf{7} (1994), 159--168.

\end{thebibliography}
\end{document}